\crefname{theorem}{Theorem}{Theorems}
\crefname{thm}{Theorem}{Theorems}
\crefname{lemma}{Lemma}{Lemmas}
\crefname{lem}{Lemma}{Lemmas}
\crefname{remark}{Remark}{Remarks}
\crefname{prop}{Proposition}{Propositions}
\crefname{defn}{Definition}{Definitions}
\crefname{corollary}{Corollary}{Corollaries}
\crefname{conjecture}{Conjecture}{Conjectures}
\crefname{question}{Question}{Questions}
\crefname{chapter}{Chapter}{Chapters}
\crefname{section}{Section}{Sections}
\crefname{figure}{Figure}{Figures}
\theoremstyle{plain}
\newtheorem{thm}{Theorem}[section]
\newtheorem{lemma}[thm]{Lemma}
\newtheorem{theorem}[thm]{Theorem}
\newtheorem{corollary}[thm]{Corollary}
\newtheorem{prop}[thm]{Proposition}
\newtheorem{conjecture}[thm]{Conjecture}
\theoremstyle{definition}
\newtheorem{problem}[thm]{Problem}
\theoremstyle{remark}
\newtheorem{remark}[thm]{Remark}
\numberwithin{equation}{section}
\renewcommand{\P}{\mathbb P}
\newcommand{\E}{\mathbb E}
\newcommand{\R}{\mathbb R}
\newcommand{\Z}{\mathbb Z}
\newcommand{\N}{\mathbb N}
\newcommand{\cE}{\mathcal E}
\newcommand{\cO}{\mathcal O}
\newcommand{\eps}{\varepsilon}
\newcommand{\Aut}{\operatorname{Aut}}
\newcommand{\bP}{\mathbf P}
\newcommand{\bE}{\mathbf E}
\newcommand{\stab}{\operatorname{Stab}}
\def\P{\mathbb{P}}
\renewcommand{\Pr}{ \mathrm P}
\newcommand{\la}{\lambda}
\DeclareMathSymbol{\leqslant}{\mathalpha}{AMSa}{"36} 
\DeclareMathSymbol{\geqslant}{\mathalpha}{AMSa}{"3E} 
\DeclareMathSymbol{\eset}{\mathalpha}{AMSb}{"3F}     
\renewcommand{\epsilon}{\varepsilon}
\newcommand{\EE}{\mathcal{E}}
\newcommand{\sfrac}[2]{\mbox{\small $\frac{#1}{#2}$}}
\newcommand{\ssfrac}[2]{\mbox{\footnotesize $\frac{#1}{#2}$}}
\newcommand{\half}{\ssfrac{1}{2}}
\title{\bf No 
percolation at criticality 
on certain groups of intermediate growth
}
\renewenvironment{abstract}
 {\par\noindent\textbf{\abstractname.}\ \ignorespaces}
 {\par\medskip}
\author{{\bf Jonathan Hermon and Tom Hutchcroft}}
\begin{document}

\date{\small{\today}}

\maketitle

\setstretch{1.1}

\begin{abstract}
We prove that critical percolation has no infinite clusters almost surely on any unimodular quasi-transitive graph satisfying a return probability upper bound of the form $p_n(v,v) \leq \exp\left[-\Omega(n^\gamma)\right]$ for some $\gamma>1/2$. The result is new in the case that the graph is of intermediate volume growth.
\end{abstract}

\section{Introduction}\label{sec:intro}

In \textbf{Bernoulli bond percolation}, first studied by Broadbent and Hammersley \cite{MR91567}, each edge of a connected, locally finite graph $G$  is either deleted or retained at random  with retention probability $p\in[0,1]$, independently of all other edges. We denote the random graph obtained this way by $\omega_p$. Connected components of $\omega_p$ are referred to as \textbf{clusters}.
 Percolation theorists are primarily interested in the geometry of the open clusters and how this geometry changes as the parameter $p$ is varied. We are particularly interested in \emph{phase transitions}, where the geometry of $\omega_p$ changes abruptly as we vary $p$ through some special value.  
%
%
%
%
%
The first basic result about percolation, without which the model would not be nearly as interesting, is that for most infinite graphs (excluding e.g.\ one-dimensional counterexamples such as the infinite line graph $\Z$), percolation undergoes a \emph{non-trivial phase transition}, meaning that the \textbf{critical probability}
\[p_c(G) = \inf\bigl\{ p \in [0,1] : \omega_p \text{ has an infinite cluster almost surely}\bigr\}\]
is strictly between zero and one\footnote{It is a consequence of Kolmogorov's 0-1 law that the probability that $\omega_p$ contains an infinite cluster is either $0$ or $1$. Moreover, straightforward coupling arguments show that if $\omega_p$ contains an infinite cluster almost surely then $\omega_q$ contains an infinite cluster almost surley for every $p \leq q \leq 1$, see \cite[Page 11]{grimmett2010percolation}.}. Indeed, a very general result to this effect has recently been proven by Duminil-Copin, Goswami, Raoufi, Severo, and Yadin \cite{1806.07733}, which implies in particular that $0<p_c<1$ for every quasi-transitive graph of superlinear volume growth.

Once we know that the phase transition is non-trivial, the next question is to determine what happens when $p$ is exactly equal to the critical value $p_c$. This is a much more delicate question. Indeed, one of the most important open problems in percolation theory is to prove that critical percolation on the $d$-dimensional hypercubic lattice $\Z^d$ does not contain any infinite clusters almost surely for every $d\geq 2$. This problem was solved in two dimensions by Russo in 1981 \cite{russo1981critical}, and for all $d\geq 19$ by Hara and Slade in 1994 \cite{hara1994mean}. More recently, Fitzner and van der Hoftstad \cite{fitzner2015nearest} sharpened the methods of Hara and Slade to solve the problem for all $d\geq 11$. It is expected that this method can in principle, and with great effort and ingenuity, be pushed to handle all $d\geq 7$, while dimensions $3,4,5,$ and $6$ are expected to require new approaches. 
Similar results for other Euclidean lattices have been obtained in \cite{MR1124831,MR1144091,MR3503025}.

In their highly influential paper \cite{bperc96},  Benjamini and Schramm proposed  a systematic study of percolation on general \textbf{transitive} graphs, that is, graphs for which the action of the automorphism group on the vertex set has a single orbit (i.e., graphs for which any vertex can be mapped to any other vertex by a symmetry of the graph), and more generally on \textbf{quasi-transitive graphs}, for which there are only finitely many orbits. Prominent examples of transitive graphs include Cayley graphs of finitely generated groups. The following is among the most important of the many outstanding conjectures that they formulated.


\begin{conjecture}[Benjamini and Schramm 1996]
\label{conj:pc}
Let $G$ be a quasi-transitive graph. If $p_c(G)<1$ then critical Bernoulli bond percolation on $G$ has no infinite clusters almost surely.
\end{conjecture}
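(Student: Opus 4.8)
\medskip

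\noindent\textbf{Proof proposal.} The conjecture is wide open in general, so the realistic target is the special case announced in the abstract: if $G$ is unimodular quasi-transitive with $p_n(v,v)\le \exp[-\Omega(n^\gamma)]$ for some $\gamma>1/2$ and $p_c=p_c(G)<1$, then, writing $K_{\root}$ for the cluster of a fixed origin $\root$, we have $\theta(p_c):=\P_{p_c}(|K_{\root}|=\infty)=0$. I would argue by contradiction, assuming $\theta:=\theta(p_c)>0$, and play the strong connectivity this forces at $p_c$ against the fast escape of random walk. Three standard inputs set the stage: (a) by the Aizenman--Barsky and Duminil-Copin--Tassion sharpness theory, the cluster of $\root$ has an exponential tail for every $p<p_c$, while at $p=p_c$ the finite-criterion quantity $\varphi_{p_c}(S):=p_c\sum_{x\in S}\sum_{y\sim x,\,y\notin S}\P_{p_c}(\root\leftrightarrow x\text{ in }S)$ satisfies $\varphi_{p_c}(S)\ge 1$ for every finite $S\ni\root$; (b) since $\gamma<1$ forces the spectral radius of simple random walk to be $1$, Kesten's criterion makes $G$ amenable (for $\gamma\ge 1$ the graph is non-amenable, a case in which the conclusion is known by other arguments), so, being also unimodular, it carries a.s.\ a \emph{unique} infinite cluster $\cK$ at $p_c$ by the Burton--Keane argument; (c) since $\{\root\in\cK\}=\{|K_{\root}|=\infty\}$ is increasing, Harris--FKG yields the \emph{uniform} bound $\tau_{p_c}(\root,x):=\P_{p_c}(\root\leftrightarrow x)\ge\theta^2>0$ for every vertex $x$ --- i.e.\ the infinite cluster is everywhere-dense in a quantitative way.

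\medskip

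\noindent The geometry enters through the Coulhon--Grigoryan correspondence: the hypothesis $p_n(v,v)\le\exp[-\Omega(n^\gamma)]$ is equivalent, up to constants, to the isoperimetric inequality $|\partial_E S|\ge c\,|S|\,(\log|S|)^{(\gamma-1)/(2\gamma)}$ for all finite $S$, and $\gamma>1/2$ is precisely the regime in which this profile beats $|S|(\log|S|)^{-1/2}$ --- the same square-root scale that appears in the universal lower bound $\P_{p_c}(|K_{\root}|\ge n)\ge c\,n^{-1/2}$. The plan is to combine (c) with this profile through a multi-scale / sprinkling renormalisation. At scale $L$, tile $G$ by translates of a block of volume $\asymp L$ (available by amenability); using (c) together with a variance estimate for $|\cK\cap(\text{block})|$, show that each block is ``good'' --- it carries a cluster meeting a definite fraction of the block and exiting its edge-boundary with probability bounded below --- with probability $\to 1$ as $L\to\infty$, uniformly over blocks. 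The isoperimetric inequality makes the interfaces between adjacent blocks only a $(\log L)^{(\gamma-1)/(2\gamma)}$-fraction of their volume, so good blocks can be glued through these interfaces at a small, controlled cost, producing a stochastically dominated supercritical block process of Pisztora--Antal type. Because only a little connectivity is lost per gluing, the construction survives replacing $p_c$ by $p_c-\epsilon$ for a suitable $\epsilon>0$ (the sprinkled edges being used for the gluing); it then exhibits an infinite cluster at $p_c-\epsilon$, contradicting the definition of $p_c$. The errors accumulated over scales $L_k\to\infty$ admit a summable majorant exactly when the profile sits on the summable side of the square-root scale, which is the condition $\gamma>1/2$; at $\gamma=1/2$ the scheme is borderline divergent.

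\medskip

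\noindent The main obstacle is this last step: upgrading ``over-connectivity at $p_c$ plus a polylogarithmic isoperimetric profile'' to ``supercriticality at $p_c-\epsilon$''. On $\Z^d$ this is the Aizenman--Kesten--Newman / Pisztora renormalisation, but there one exploits the product structure of the lattice, surface-to-volume ratios are $O(L^{-1})$, and no lower bound on the isoperimetric ratio is needed at all; here $G$ has no product structure and each block's boundary is only a vanishingly --- yet merely polylogarithmically --- small fraction of its volume, so one must track precisely how the gluing errors propagate along the exhaustion and how large they may be while still leaving room for the $\epsilon$-decrease in $p$. Sharp control of the density and of the anchored isoperimetry of supercritical clusters on amenable graphs of subexponential growth is what drives this, and making everything converge under the sole assumption $\gamma>1/2$ is where essentially all of the difficulty lies. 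A possibly cleaner alternative would bypass the renormalisation altogether and instead feed the isoperimetric profile and the criterion $\varphi_{p_c}(S)\ge1$ into a bootstrap producing a power-law bound $\P_{p_c}(|K_{\root}|\ge n)\le C\,n^{-c(\gamma)}$ with $c(\gamma)>0$ if and only if $\gamma>1/2$, which forces $\theta(p_c)=\lim_n\P_{p_c}(|K_{\root}|\ge n)=0$ directly; the same square-root threshold governs the exponent there.
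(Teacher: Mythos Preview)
Your main line of attack --- assume $\theta(p_c)>0$, use FKG to force $\tau_{p_c}\ge\theta^2$, and run a block/sprinkling renormalisation down to $p_c-\epsilon$ --- is not how the paper proceeds, and it has a genuine gap that you yourself flag. But the gap is more serious than a missing summability check. First, an isoperimetric \emph{lower} bound $|\partial S|\gtrsim |S|(\log|S|)^{(\gamma-1)/(2\gamma)}$ does not make block interfaces small; it makes them \emph{large}. What would help a Pisztora-type scheme is an \emph{upper} bound on the boundary-to-volume ratio of the blocks you tile with, which comes from amenability alone and has nothing to do with $\gamma$. So the mechanism by which $\gamma>1/2$ is supposed to enter your renormalisation is not correctly identified. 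Second, ``tile $G$ by translates of a block'' is not available on a general amenable quasi-transitive graph; you get F{\o}lner sets, not a tiling, and gluing F{\o}lner sets along their (possibly badly overlapping, non-planar) boundaries on a graph with no product structure is precisely where the Euclidean renormalisation technology breaks down. No one has carried out such a scheme in this generality, and the proposal does not supply the missing idea.

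The paper's argument is closer in spirit to your last sentence (a bootstrap producing decay of $\bP_{p_c}(|K_\rho|\ge n)$), but the mechanism is quite specific and not the one you sketch. The two ingredients you are missing are: (i) the \emph{two-ghost inequality}, a universal bound $\bP_p(\mathscr S_{e,n})\lesssim n^{-1/2}$ on the probability that a closed edge separates two large finite clusters, which via an insertion-tolerance surgery (\cref{lem:surgery}) yields $P_p(n)^2\lesssim p^{-k}n^{-1/2}+\kappa_p(k)$; and (ii) a \emph{mass-transport} identity that turns the two-point function $\kappa_p(k)$ into $\E_p[\Pr_{\mu_{K_\rho}}(X_k\in K_\rho)]$, after which the heat-kernel hypothesis, converted to a spectral-profile bound, gives $\Pr_{\mu_D}(X_k\in D)\le\exp[-c\min\{k^\gamma,k/\log^\alpha|D|\}]$ with $\alpha=(1-\gamma)/\gamma$. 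Feeding (ii) back into (i) with a suitable choice of $k$ yields a self-referential inequality $\E_p\exp[\log^\beta|K_\rho|]\le C(1+\E_p\exp[\log^\beta|K_\rho|])^{1/2}$ for any $\beta<1-\alpha=(2\gamma-1)/\gamma$; since the left side is finite for $p<p_c$ by sharpness, this closes to a uniform bound on $[0,p_c)$ and hence at $p_c$ by monotone convergence. The threshold $\gamma>1/2$ is exactly the condition $\alpha<1$, i.e.\ that some $\beta\in(0,1-\alpha)$ exists. None of this uses a contradiction hypothesis, uniqueness, or renormalisation; the $\varphi_p(S)\ge1$ criterion plays no role.
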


Aside from the previously mentioned results in the Euclidean setting, previous progress on \cref{conj:pc} can briefly be summarised as follows. 
Benjamini, Lyons, Peres, and Schramm \cite{BLPS99b} proved that \cref{conj:pc} holds  for every \emph{unimodular, nonamenable} transitive graph. Here, \emph{unimodularity} is a technical condition that holds for every Cayley graph and every amenable quasi-transitive graph; see \cref{sec:unimodular} for further background. Tim\'ar \cite{timar2006percolation} later showed that critical percolation on any \emph{nonunimodular} transitive graph cannot have \emph{infinitely many} infinite clusters. Both results are easily generalised to the quasi-transitive setting. In \cite{Hutchcroft2016944}, the second author of this article showed that critical percolation on any quasi-transitive graph of \emph{exponential growth} cannot have a \emph{unique} infinite cluster. Together with the aforementioned results of Benjamini, Lyons, Peres, and Schramm and Tim\'ar, this established that \cref{conj:pc} holds for every quasi-transitive graph of exponential growth. An alternative proof of this result in the unimodular case was recently given in \cite{1808.08940}. 
All of these proofs have elements that are very specific to the exponential growth setting, and completely break down without this assumption. 

In this paper, we build upon the ideas of \cite{1808.08940} to develop a new method of proving that there are no infinite clusters at criticality. 
  This new method applies in particular to certain transitive graphs of \emph{intermediate growth}, for which the volume $|B(v,r)|$ of a ball of radius $r$ grows faster than any polynomial in $r$ but slower than any exponential of $r$. (In notation\footnote{Here we use Landau's asymptotic notation: In particular, for non-negative $f(n)$ and $g(n)$, ``$f(n)=o(g(n))$ as $n\to \infty$'' and ``$g(n)=\omega(f(n))$ as $n\to\infty$'' both  mean that $\lim_{n\to\infty} f(n)/g(n) = 0$, while ``$f(n) = O(g(n))$ as $n \to \infty$'' and ``$g(n)=\Omega(f(n))$ as $n \to\infty$'' both mean that $\limsup_{n\to\infty} f(n)/g(n) < \infty$.}, a graph has intermediate growth if $r^{\omega(1)}\leq |B(v,r)| \leq e^{o(r)}$ as $r\to \infty$.) No such graph had previously been proven to satisfy \cref{conj:pc}.  The hypotheses of our results are most easily stated in terms of the $n$-step simple random walk return probabilities $p_n(v,v)$. Given $c>0$ and $0<\gamma\leq 1$, we say that a graph satisfies \eqref{ass:HK} if
\begin{equation}
\label{ass:HK}
p_n(v,v) \leq  \exp\left[ -c n ^{\gamma} \right] \qquad \text{ for every $v\in V$ and $n\geq 1$.} \tag{$\mathrm{HK}_{\gamma,c}$}
\end{equation}
We can now state our main theorem. 

\begin{theorem}
\label{thm:main}
Let $G$ be a unimodular quasi-transitive graph satisfying \eqref{ass:HK} for some $c>0$ and $\gamma >1/2$. Then critical Bernoulli bond percolation on $G$ has no infinite clusters almost surely.
\end{theorem}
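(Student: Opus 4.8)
The plan is to argue by contradiction, first reducing to the amenable case with a unique infinite cluster, and then deriving a contradiction from a differential inequality --- built in the spirit of \cite{1808.08940} and of the Duminil-Copin--Raoufi--Tassion sharpness argument --- whose quantitative strength is governed by the heat-kernel hypothesis.

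\textbf{Reduction.} By the theorem of Benjamini, Lyons, Peres and Schramm \cite{BLPS99b}, \cref{conj:pc} holds for every unimodular nonamenable quasi-transitive graph, so it suffices to treat the amenable case; we therefore assume $G$ is amenable, which also makes the unimodularity hypothesis automatic. Note that $p_c<1$ here, since \eqref{ass:HK} forces superlinear growth \cite{1806.07733}. Suppose for contradiction that $\theta(p_c):=\P_{p_c}(|K_v|=\infty)>0$. Since $G$ is amenable and quasi-transitive, the Burton--Keane argument shows that $\omega_{p_c}$ has a unique infinite cluster almost surely; combining this with the Harris--FKG inequality applied to the increasing events $\{u\leftrightarrow\infty\}$ and $\{w\leftrightarrow\infty\}$ --- for which uniqueness gives $\{u\leftrightarrow\infty\}\cap\{w\leftrightarrow\infty\}\subseteq\{u\leftrightarrow w\}$ --- yields a uniform positive lower bound on the two-point function,
\[
\tau_{p_c}(u,w):=\P_{p_c}(u\leftrightarrow w)\ \ge\ \theta_0:=\big(\min_o\theta(p_c)\big)^2\ >\ 0\qquad\text{for all }u,w\in V ,
\]
the minimum being over the finitely many orbit representatives $o$. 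In particular $\P_{p_c}(v\leftrightarrow\partial B(v,n))\ge\theta_0$ for every $n$, while summing over a ball gives $\E_{p_c}[|K_v\cap B(v,n)|]\ge\theta_0|B(v,n)|$, which grows at least stretched-exponentially in $n$ since \eqref{ass:HK} forces a matching volume lower bound via a Nash/Faber--Krahn estimate.

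\textbf{The differential inequality and the contradiction.} Write $g_n(p):=\P_p(v\leftrightarrow\partial B(v,n))$. The core of the argument is a quantitative estimate, valid on any quasi-transitive graph, of the form
\[
\frac{\mathrm{d}}{\mathrm{d}p}\log\frac{g_n(p)}{1-g_n(p)}\ \ge\ c\,h_n\qquad\text{for }p\le p_c ,
\]
where $h_n$ is an explicit functional of the simple random walk return probabilities $\big(p_k(v,v)\big)_{k\ge1}$ --- and, through the analysis at criticality, of the two-point function bounds above. I would prove it via the OSSS inequality applied to a decision tree that explores $\omega_p$ by running a simple random walk from $v$, randomised over its run-length in the manner of Duminil-Copin, Raoufi and Tassion: the revealment of an edge at distance about $m$ from $v$ is then controlled by a combination of heat-kernel quantities --- partial sums of $p_k(v,v)$, together with Carne--Varopoulos-type off-diagonal decay --- and of percolation two-point functions, and the subexponential bound $p_k(v,v)\le\exp[-ck^\gamma]$ makes the relevant sums small. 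The arithmetic of this bound is exactly where $\gamma>1/2$ enters: it is the regime in which the walk disperses across scales faster than the exploration accumulates revealment, so that the maximal revealment tends to $0$ and hence $h_n\to\infty$; for polynomially decaying return probabilities, as on $\Z^d$, one only obtains $h_n=O(1)$, consistent with that case remaining open. Granting this, integrate the inequality over $p\in[p_c,p_c+\epsilon]$ with $\epsilon$ small enough that $p_c+\epsilon<1$: the left-hand side equals $\log\frac{g_n(p_c+\epsilon)}{1-g_n(p_c+\epsilon)}-\log\frac{g_n(p_c)}{1-g_n(p_c)}$, which is at most $\log\frac{1-\eta}{\eta}-\log\theta_0$ by $g_n(p_c)\ge\theta_0$ and the deterministic bound $g_n(p)\le 1-(1-p)^{D}\le 1-\eta$ (with $D$ the maximal degree and $\eta=\eta(\epsilon)>0$), while the right-hand side is at least $c\,\epsilon\,h_n\to\infty$. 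This is a contradiction for large $n$, so $\theta(p_c)=0$.

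\textbf{Main obstacle.} The delicate point --- and the source of the novelty --- is the construction and analysis of the random-walk decision tree giving the sharp revealment bound, together with an honest accounting of why $\gamma=1/2$ is precisely the threshold and why polynomially decaying return probabilities are too weak. The naive scheme, in which one reveals the entire cluster of the random-walk trajectory, is hopelessly lossy at criticality --- since $\tau_{p_c}$ is bounded below it reveals essentially everything --- so the exploration must be organised to reveal only as much as is needed to decide the arm event, and \eqref{ass:HK} must be leveraged to show that the walk spreads out faster than revealment builds up. (A conceivable alternative would instead exploit \eqref{ass:HK} through the random walk on the infinite cluster, which inherits an anchored isoperimetric profile from $G$, playing this off against the cluster density $\theta_0$.) The remaining ingredients are comparatively routine: the bookkeeping that turns a scale-by-scale inequality into a clean contradiction --- which may require running the argument over a well-chosen family of scales or iterating it --- and the passage from the quasi-transitive to the transitive setting and from the nearest-neighbour exploration to the degree-weighted walk underlying \eqref{ass:HK}.
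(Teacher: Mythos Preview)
Your proposal contains a genuine gap at exactly the point you yourself flag as the ``main obstacle'': you never construct the decision tree, and in fact the scheme cannot work as stated. Under your contradiction hypothesis $\tau_{p_c}(u,w)\ge\theta_0>0$ for \emph{all} $u,w$, any random-walk exploration that queries clusters of visited vertices has revealment bounded below by $\theta_0$ at every edge --- as you note, the naive scheme ``reveals essentially everything'' --- so the OSSS inequality yields no nontrivial lower bound on the derivative, and $h_n\not\to\infty$. Saying the exploration ``must be organised to reveal only as much as is needed to decide the arm event'' is a wish, not a construction: to decide $\{v\leftrightarrow\partial B(v,n)\}$ you must at some point discover the status of pivotal edges arbitrarily far from the walk, and the two-point lower bound makes these discoveries expensive regardless of how the exploration is organised. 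There is also an internal inconsistency: you assert the differential inequality for $p\le p_c$ but then integrate it over $[p_c,p_c+\epsilon]$; above $p_c$ the two-point function is \emph{more} strongly bounded below, so the revealment obstruction is only worse there. Neither of your suggested alternatives (random walk on the infinite cluster, anchored isoperimetry) is developed enough to evaluate.

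The paper's argument is entirely different and avoids both OSSS and proof by contradiction. It proves directly that $\bE_p\exp[\log^\beta|K_v|]$ is bounded uniformly in $p\le p_c$ for every $\beta<(2\gamma-1)/\gamma$; the condition $\gamma>1/2$ is precisely what makes such $\beta>0$ exist. The ingredients are: (i) the two-ghost inequality of \cite{1808.08940} (an Aizenman--Kesten--Newman-type estimate) together with an insertion-tolerance lemma, which convert a bound on $\kappa_p(k)=\inf_{d(u,v)\le k}\tau_p(u,v)$ into a bound on the cluster-tail $P_p(n)$; and (ii) a random-walk estimate showing that a walk from a uniform point of a finite set $D$ lies in $D$ at time $k$ with probability at most $\exp\bigl[-c'\min\{k^\gamma,\,k/\log^\alpha|D|\}\bigr]$, $\alpha=(1-\gamma)/\gamma$. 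The mass-transport principle gives $\kappa_p(k)\le\E_p\bigl[\Pr_{\mu_{K_\rho}}(X_k\in K_\rho)\bigr]$, so (ii) turns a bound on $\E_p\exp[\log^\beta|K_\rho|]$ back into a bound on $\kappa_p$. Feeding this loop into itself yields a closed inequality $\E_p\exp[\log^\beta|K_\rho|]\le C\sqrt{1+\E_p\exp[\log^\beta|K_\rho|]}$ for $p<p_c$; since the left side is finite there by sharpness of the phase transition, it is uniformly bounded, and the bound extends to $p_c$ by monotone convergence. The heat-kernel hypothesis thus enters not through revealment of a decision tree but through a spectral-profile estimate on the escape of the walk from a finite set.
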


See \cref{sec:closing} for a discussion of some variations on this result and a discussion of how our proof breaks down in the case $\gamma <1/2$.
Examples of groups of intermediate growth whose Cayley graphs satisfy the hypotheses of \cref{thm:main} can be constructed as \emph{piecewise automatic groups} \cite[Corollary 1]{MR2254627} or using the notion of  \emph{diagonal products} \cite{MR3034295}. (An analysis of the heat kernel on diagonal products will appear in a forthcoming work of Amir and Zheng.) Further examples can easily be constructed by, say, taking products of these groups with other groups of subexponential growth.   For further background on groups of intermediate growth see \cite{MR2478087} and references therein.
Further works concerning probability on groups of intermediate growth include \cite{MR1841989,MR3607808,erschler2018growth}. 


Note that \cref{thm:main} also implies that $p_c<1$, so that we obtain an independent proof of the recent result of \cite{1806.07733} in the special case of the class of graphs we consider. We also remark that \cref{thm:main} implies that there is no percolation at $p_c$ on any unimodular quasi-transitive graph satisfying an isoperimetric inequality of the form $|\partial K|\geq c|K|/\log^\delta |K|$ for $c>0$ and $0<\delta<1/2$, see \cite{MR2198701} and \cref{remark:isoperimetry}.

The proof of \cref{thm:main} is quantitative, and also yields explicit  bounds on the tail of the volume of a critical cluster. In particular, we obtain the following bound in the transitive setting. The corresponding bound for quasi-transitive graphs is given in \cref{thm:quantquasi}. We write $\bP_p$ and $\bE_p$ for probabilities and expectations taken with respect to the law of $\omega_p$ and write $K_v$ for the cluster of $v$ in $\omega_p$. 

\begin{theorem}
\label{thm:quanttransitive}
Let $G=(V,E)$ be a unimodular transitive graph with maximum degree at most $M$ satisfying \eqref{ass:HK} for some $c>0$ and $\gamma >1/2$. Then for every $0\leq \beta < (2\gamma-1)/\gamma$ there exists $C(\beta)=C(\beta,\gamma,M,c)$ such that
\[
\bE_{p}\exp\left[\log^\beta |K_v|\right] \leq C(\beta)\]
for every $p\leq p_c$.
\end{theorem}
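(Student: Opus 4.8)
The plan is to derive the moment bound from a uniform tail estimate: for every $\beta'<(2\gamma-1)/\gamma$ I aim to show $\sup_{p\le p_c}\bP_p(|K_v|\ge n)\le\exp(-\log^{\beta'}n)$ for all large $n$, from which $\bE_p\exp[\log^{\beta}|K_v|]\le C(\beta)$ follows by summation by parts for every $\beta<\beta'$, hence for every $\beta<(2\gamma-1)/\gamma$. Since $p\mapsto\bP_p(|K_v|\ge n)$ is nondecreasing and left-continuous in $p$ — it is an increasing limit of polynomials in $p$, namely $\bP_p(|K_v|\ge n)=\lim_{m\to\infty}\bP_p(|K_v^{(m)}|\ge n)$, where $K_v^{(m)}$ is the cluster of $v$ in $\omega_p$ restricted to the edges of $B(v,m)$ — it is enough to prove the tail estimate for $p<p_c$, where $K_v$ is almost surely finite.

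The first step is to translate \eqref{ass:HK} into geometry. By the standard equivalence, on bounded-degree graphs, between on-diagonal heat-kernel upper bounds and Faber--Krahn (hence combinatorial isoperimetric) lower bounds (Coulhon--Grigor'yan, Carlen--Kusuoka--Stroock), the hypothesis $p_n(v,v)\le\exp[-cn^\gamma]$ with $\gamma>1/2$ is equivalent to a lower bound on the isoperimetric profile of the form $|\partial_E S|\ge c'\,|S|/(\log|S|)^{\delta}$ for every finite $S\subseteq V$, where $\delta=(1-\gamma)/(2\gamma)\in[0,1/2)$. In particular $(2\gamma-1)/\gamma=1-2\delta$, so $1-2\delta$ is the tail exponent we must reach.

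The engine is a differential inequality in $p$ for a regularized generating function of the cluster volume — say $\Psi_N(p):=\bE_p[f(|K_v|\wedge N)]$ with $f$ increasing, eventually concave, subpolynomial, and $f(n)\asymp\exp(\lambda\log^{\beta}n)$ — or a variant weighted by $1/|K_v|$. Russo's formula gives $\tfrac{d}{dp}\Psi_N(p)=\sum_e\bE_p[\Delta_e (f(|K_v|\wedge N))]$, and the contributing edges are exactly those joining $K_v$ to a disjoint finite cluster; such an edge, joining $K_v$ (of size $a$) to a cluster of size $b$, contributes $f((a+b)\wedge N)-f(a\wedge N)$. Two features of the problem are then played against each other: concavity of $f$ gives $f(a+b)-f(a)\lesssim\lambda\beta(b/a)(\log a)^{\beta-1}f(a)$ when $b\lesssim a$, so a boundary edge leading to a not-too-large neighbour contributes only a $(\log a)^{\beta-1}$-fraction of $f(a)$ per vertex of the neighbour; but the isoperimetric inequality guarantees that a finite cluster of size $a$ has at least $c'a/(\log a)^{\delta}$ closed boundary edges, so these small gains are collectively plentiful. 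Weighing the per-edge gain against the $p$-increment of order $(\log a)^{\delta}/a$ needed to open one such edge, one should obtain an a priori bound $\sup_{p\le p_c}\Psi_N(p)\le C(\gamma,M,c,\beta)$, uniformly in $N$, precisely when $\beta<1-2\delta$; letting $N\to\infty$ (monotone convergence) and using left-continuity at $p_c$ then gives the tail estimate, and in particular $\bE_{p_c}\exp[\log^{\beta}|K_v|]<\infty$, whence $|K_v|<\infty$ $\bP_{p_c}$-a.s.\ — which is \cref{thm:main} for transitive $G$.

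The step I expect to require the most work — and the one where $\gamma>1/2$ is essential — is controlling the boundary edges of $K_v$ leading to a \emph{large} neighbouring cluster $C$, for which $f(a+b)-f(a)$ is of order $f(b)\gg f(a)$, so that a single merger could in principle make $\tfrac{d}{dp}\Psi_N$ far larger than any fixed multiple of $\Psi_N$. Here unimodularity enters through the mass-transport principle: transporting, along each edge between two distinct clusters $K_u,K_w$, a mass proportional to $h(|K_u|,|K_w|)/(|K_u|\,|K_w|)$ gives the identity
\[
\bE_p\!\left[\frac1{|K_v|}\sum_{C\sim K_v}|\partial_E(K_v,C)|\,h(|K_v|,|C|)\right]
=\bE_p\!\left[\frac1{|K_v|}\sum_{C\sim K_v}|\partial_E(K_v,C)|\,h(|C|,|K_v|)\right]
\]
for any $h\ge0$ — a precise form of the heuristic ``the cluster next door is statistically like your own cluster.'' Choosing $h$ adapted to $f$ and combining this with the isoperimetric lower bound on $|\partial_E K_v|$ should allow the large-neighbour terms to be reabsorbed and the inequality to be closed; once $\delta\ge1/2$ the boundary becomes too sparse relative to the volume for the isoperimetric gain to outrun the cost of such mergers, and the approach breaks down, which is why the threshold falls exactly at $\gamma=1/2$.
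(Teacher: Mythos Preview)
Your approach is genuinely different from the paper's, but it has two concrete gaps. First, the heat-kernel--to--isoperimetry conversion is off by a factor of two in the exponent. Cheeger's inequality gives $\Lambda\gtrsim\Phi_*^2$ (cf.\ \eqref{e:cheeger}), so an isoperimetric bound with $\delta=(1-\gamma)/(2\gamma)$ \emph{implies} $(\mathrm{HK}_{\gamma,c})$; but the converse, which is what you need, only yields the spectral bound $\Lambda(x)\gtrsim(\log x)^{-(1-\gamma)/\gamma}$ (\cref{prop:equiv}) and hence merely $\Phi_*(x)\ge\Lambda(x)\gtrsim(\log x)^{-(1-\gamma)/\gamma}$. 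With $\delta=(1-\gamma)/\gamma$ your threshold $1-2\delta>0$ becomes $\gamma>2/3$, not $\gamma>1/2$. The paper avoids this loss by working directly with the spectral profile, for which the equivalence with heat-kernel decay does hold at the correct exponent. Second, and more seriously, the differential-inequality step is not an argument. Russo gives $\tfrac{d}{dp}\Psi_N\ge0$ as a sum over merger events, but you never say what inequality you hope to derive from this, nor how any bound on a nonnegative derivative could produce a \emph{uniform} bound on $\Psi_N(p)$ over $[0,p_c)$. The phrase ``weighing the per-edge gain against the $p$-increment needed to open one such edge'' does not correspond to an identifiable mechanism, and the mass-transport identity you write, while correct, merely symmetrizes the roles of $K_v$ and its neighbour --- it does not by itself bound the dangerous large-$b$ contributions $f(b)$ by anything you can absorb.

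For comparison, the paper's route is entirely different: it combines the two-ghost inequality (\cref{thm:twoghosttransitive}) with the surgery lemma (\cref{lem:surgery}) to get $P_p(n)^2\le C\bigl[\sum_{i<k}p^{-i}\bigr]n^{-1/2}+\kappa_p(k)$, bounds $\kappa_p(k)\le\E_p[\Pr_{\mu_{K_\rho}}(X_k\in K_\rho)]$ via mass-transport, and then applies the spectral-profile escape estimate (\cref{cor:SP}) to obtain, after choosing $k\asymp\log n$ and integrating the tail, a self-referential inequality $\E_p\exp[\log^\beta|K_\rho|]\le C(1+\E_p\exp[\log^\beta|K_\rho|])^{1/2}$ for each $p<p_c$. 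Sharpness of the phase transition makes the left side finite a priori, so the inequality closes to a bound independent of $p$, and left-continuity extends it to $p_c$. Mass-transport is indeed used, but to convert ``walk started at $\rho$'' into ``walk started at a uniform point of $K_\rho$'' so that \cref{cor:SP} applies --- a completely different purpose from the one you propose.
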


We expect these bounds to be very far from optimal. Indeed, it is widely believed that critical percolation on any quasi-transitive graph of at least seven dimensional volume growth should satisfy $\bP_{p_c}(|K_v| \geq n) \preceq n^{-1/2}$ as $n\to\infty$. See e.g.\ \cite{Hutchcroftnonunimodularperc,1804.10191,heydenreich2015progress} and references therein for a detailed discussion of what is currently known regarding such bounds.


An immediate corollary of \cref{thm:quanttransitive} is that Schramm's locality conjecture \cite[Conjecture 1.2]{MR2773031} holds in the case of graph sequences uniformly satisfying \eqref{ass:HK} for some $\gamma>1/2$. 

\begin{corollary}
\label{cor:locality}
Let $(G_n)_{n\geq 1}$ be a sequence of infinite unimodular transitive graphs converging locally to a transitive graph $G$, and suppose that there exist $c>0$ and $\gamma>1/2$ such that $G_n$ satisfies \eqref{ass:HK} for every $n\geq 1$. Then $p_c(G_n) \to p_c(G)$ as $n\to\infty$.
\end{corollary}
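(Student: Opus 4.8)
The plan is to prove the two one‑sided bounds $\limsup_{n}p_c(G_n)\le p_c(G)$ and $\liminf_{n}p_c(G_n)\ge p_c(G)$ separately; only the first will use \cref{thm:quanttransitive}, the second being a soft consequence of the sharpness of the phase transition. First I would record the standard preliminaries. Since $G_n\to G$ locally and $G$ is transitive, for each fixed $r$ we have $B_{G_n}(v,r)\cong B_G(v,r)$ for all large $n$; in particular the degrees agree with some fixed $M:=\deg(G)$ for all large $n$, so \cref{thm:quanttransitive} applies to these $G_n$ with one and the same constant $C(\beta)=C(\beta,\gamma,M,c)$ (discarding the finitely many exceptional $G_n$ changes nothing). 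I would also use that $\{v\leftrightarrow\partial B(v,r)\}$ is measurable with respect to $\omega_p$ restricted to $B(v,r)$ — a minimal witnessing open path cannot leave $B(v,r)$ — so that $\bP^{G_n}_p(v\leftrightarrow\partial B(v,r))=\bP^G_p(v\leftrightarrow\partial B(v,r))$ once $n$ is large depending on $r$, together with the elementary identity $\theta_G(p)=\lim_{r\to\infty}\bP^G_p(v\leftrightarrow\partial B(v,r))$.

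\noindent\textbf{Upper semicontinuity.} Suppose for contradiction that $\limsup_n p_c(G_n)>p_c(G)$, and fix $p'$ with $p_c(G)<p'<\limsup_n p_c(G_n)$. Passing to a subsequence along which $p_c(G_{n_k})>p'$, \cref{thm:quanttransitive} applied to $G_{n_k}$ at the parameter $p'$ gives, for every $r$ and every $k$,
\[
\bP^{G_{n_k}}_{p'}\!\big(v\leftrightarrow\partial B(v,r)\big)\ \le\ \bP^{G_{n_k}}_{p'}\!\big(|K_v|\ge r\big)\ \le\ C(\beta)\exp\!\big[-\log^\beta r\big],
\]
with $C(\beta)$ independent of $k$. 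Fixing $r$ and sending $k\to\infty$ transfers this bound to $G$, and then sending $r\to\infty$ yields $\theta_G(p')=0$, i.e.\ $p'\le p_c(G)$ — a contradiction.

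\noindent\textbf{Lower semicontinuity.} Suppose for contradiction that $\liminf_n p_c(G_n)<p_c(G)$, and fix $p$ with $\liminf_n p_c(G_n)<p<p_c(G)$. Choose a subsequence with $p_c(G_{n_k})\to q:=\liminf_n p_c(G_n)<p$; then for all large $k$ we have $p_c(G_{n_k})<p<1$, and by the sharpness of the phase transition for Bernoulli percolation on transitive graphs (Aizenman and Barsky, Menshikov, Duminil‑Copin and Tassion) one has $\theta_{G_{n_k}}(p)\ge (p-p_c(G_{n_k}))/(p(1-p_c(G_{n_k})))\ge (p-q)/(2p)=:\delta>0$. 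Since $\{v\leftrightarrow\partial B(v,r)\}\supseteq\{v\leftrightarrow\infty\}$, for each fixed $r$ and all large $k$ we get $\bP^G_p(v\leftrightarrow\partial B(v,r))=\bP^{G_{n_k}}_p(v\leftrightarrow\partial B(v,r))\ge\theta_{G_{n_k}}(p)\ge\delta$; letting $r\to\infty$ gives $\theta_G(p)\ge\delta>0$, i.e.\ $p\ge p_c(G)$ — a contradiction. Combining the two bounds gives $p_c(G_n)\to p_c(G)$.

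\noindent\textbf{Where the difficulty lies.} The substantive input is confined to the upper‑semicontinuity step: bare local convergence of crossing probabilities would only give $\theta_{G_n}(p')\to 0$, which does not rule out $\theta_{G_n}(p')>0$ for every $n$, so no contradiction would follow; it is \cref{thm:quanttransitive} that upgrades this to a \emph{uniform‑in‑$n$} stretched‑exponential decay rate, forcing $\theta_G(p')=0$. This is exactly the direction of Schramm's conjecture that is false for general transitive graph sequences, and the heat‑kernel hypothesis is there precisely to make it true. The remaining ingredients — measurability of the crossing event, stability of the degree under local limits, the universal lower bound on $\theta$ above $p_c$, and the routine interchange of limits in $n$ and $r$ — are standard and I expect no difficulty with them.
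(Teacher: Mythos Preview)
Your argument is correct and follows the approach the paper intends. The paper omits the proof and points to \cite[Corollary~5.1]{1808.08940}, where exactly this two-part argument is carried out: a uniform-in-$n$ subcritical tail bound (here supplied by \cref{thm:quanttransitive}) is transferred through local convergence to get the hard direction $\limsup_n p_c(G_n)\le p_c(G)$, and the mean-field lower bound from sharpness handles the soft direction $\liminf_n p_c(G_n)\ge p_c(G)$.
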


See \cite{1808.08940,MR2773031} for a detailed discussion of this conjecture and for the definition of local convergence of graphs. 
The proof of \cref{cor:locality} given \cref{thm:quanttransitive} is very similar to the proof of \cite[Corollary 5.1]{1808.08940} and is omitted.

\paragraph{Proof overview}
The proof of \cref{thm:main,thm:quanttransitive} applies several of the ideas developed in the second author's recent paper \cite{1808.08940}, which we now review. Briefly, the methods of that paper allow us to convert bounds on the \textbf{two-point function} $\tau_p(u,v)$, defined to be the probability that $u$ and $v$ are connected in $\omega_p$, into bounds on the tail of the volume of a cluster whenever $0<p<p_c$. This is done as follows. For each set $K \subseteq V$, we write $E(K)$ for the set of edges of $G$ that \textbf{touch} $K$, i.e., have at least one endpoint in $K$. For each edge $e$ of $G$ and $n\geq 1$, let $\mathscr{S}_{e,n}$ be the event that $e$ is closed and that the endpoints of $e$ are in distinct clusters each of which touches at least $n$ edges and at least one of which is finite. The following universal inequality is proven in \cite{1808.08940} using a variation on the methods of Aizenman, Kesten, and Newman \cite{MR901151}. It is a form of what we call the \emph{two-ghost inequality}.

\begin{theorem}
\label{thm:twoghosttransitive}
Let $G=(V,E)$ be a unimodular transitive graph of degree $d$. Then
\[\bP_p(\mathscr{S}_{e,n}) \leq
66 d \left[\frac{1-p}{pn}\right]^{1/2}
\]
for every $e\in E$, $p\in [0,1]$ and $n \geq 1$.
\end{theorem}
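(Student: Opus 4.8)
The plan is to follow the ghost-field method of Aizenman, Kesten and Newman \cite{MR901151}, in the form used in \cite{1808.08940}: the word ``two'' in ``two-ghost inequality'' refers not to two independent ghost fields but to the fact that $\mathscr{S}_{e,n}$ constrains \emph{two} distinct clusters, each of which must separately be reached by a single ghost field. \emph{Step 1: a ghost reduction.} Independently of $\omega_p$, declare each edge of $G$ to be \emph{green} with probability $1-e^{-h}$, where $h>0$ is a parameter to be optimised. Write $e=\{x,y\}$. On $\mathscr{S}_{e,n}$ each of $E(K_x)$ and $E(K_y)$ contains at least $n$ edges, so, conditionally on $\omega_p$, each of $K_x$ and $K_y$ fails to touch a green edge with probability at most $e^{-hn}$; a union bound gives that, on $\mathscr{S}_{e,n}$, both $K_x$ and $K_y$ touch a green edge with conditional probability at least $1-2e^{-hn}$. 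Writing $\mathscr{T}_{e,h}$ for the event that $e$ is closed, that $x$ and $y$ lie in distinct finite clusters, and that each of $K_x,K_y$ touches a green edge, and noting that $\mathscr{S}_{e,n}$ together with both clusters touching green edges forces $\mathscr{T}_{e,h}$, we obtain
\[
\bP_p(\mathscr{S}_{e,n})\;\le\;(1-2e^{-hn})^{-1}\,\bP_p(\mathscr{T}_{e,h})\qquad\text{whenever }hn>\log 2,
\]
so it suffices to prove a bound of the shape $\bP_p(\mathscr{T}_{e,h})\le Cd\,(h(1-p)/p)^{1/2}$ and then take $h$ of order $1/n$.

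\emph{Step 2: the Aizenman--Kesten--Newman martingale.} Fix a large ball $B=B(v,R)$. I would reveal the green field and then reveal the states of the edges of $B$ one at a time, in an order generated by a cluster-by-cluster exploration that \emph{halts} each cluster as soon as it meets a green edge; let $(\mathcal F_t)$ be the resulting filtration and set $M_t=\bE_p[Y\mid\mathcal F_t]$, where $Y$ is (essentially) the number of finite clusters of $\omega_p$ lying in $B$ that touch a green edge. This $Y$ is chosen so that opening an edge whose endpoints lie in two distinct green finite clusters merges them and lowers $Y$ by one; thus the pivotal edges of $Y$ are exactly the closed edges witnessing a $\mathscr{T}_{e,h}$-type event, together with their would-be-open counterparts. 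Two estimates are then needed. First, an upper bound $\var(Y)\le Ch\,|E(B)|$: conditionally on $\omega_p$ the events $\{K\text{ touches a green edge}\}$ are independent over the clusters $K$, so the green-field contribution to $\var(Y)$ is at most $\sum_K\bP(K\text{ green}\mid\omega_p)\le h\sum_K|E(K)|\le 2h|E(B)|$, while the halting device keeps the remaining ($\omega_p$-measurable) fluctuation of $\bE_p[Y\mid\omega_p]$ of comparable size --- this is precisely where the ghost is indispensable, since at $p_c$ the unhalted quantity would have infinite expectation. Second, a lower bound: by orthogonality of martingale increments, $\var(Y)=\sum_t\bE_p[(M_t-M_{t-1})^2]$, and whenever the exploration reveals a $\mathscr{T}_{e,h}$-edge (or its would-be-open counterpart) with both incident clusters already certified green, the corresponding increment satisfies $\bE_p[(M_t-M_{t-1})^2\mid\mathcal F_{t-1}]=p(1-p)$; a finite-energy modification argument lets one interchange closed and open such edges at a cost comparable to $1-p$, and a degree-$d$ counting bounds, for each green cluster, the number of incident edges of the counted type by $d$ times its size. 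Comparing the two estimates --- via a Cauchy--Schwarz step bounding the first moment of the number of $\mathscr{T}_{e,h}$-edges in $B$ by the square root of a quantity built from the variance budget --- and then letting $R\to\infty$, using transitivity (or the mass-transport principle, which is where unimodularity enters) to identify the limiting density of $\mathscr{T}_{e,h}$-edges with $\bP_p(\mathscr{T}_{e,h})$, yields $\bP_p(\mathscr{T}_{e,h})\le Cd\,(h(1-p)/p)^{1/2}$.

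\emph{Step 3 and the main difficulty.} Finally I would set $h$ to a suitable constant multiple of $1/n$, which makes $(1-2e^{-hn})^{-1}$ an absolute constant and turns the bound of Step 2 into $Cd\,((1-p)/(pn))^{1/2}$, and then track the numerical constants through Steps 1 and 2 to recover the explicit value $82d$. The hard part is Step 2: the exploration and the functional $Y$ must be arranged so that the pivotal set of $Y$ is \emph{precisely} the family of edges one wants to count; the upper bound $\var(Y)\le Ch|E(B)|$ genuinely relies on the halting device and on a careful treatment of the $\omega_p$-measurable part of $Y$; and, most delicately, the comparison of the two variance bounds must be organised so as to produce the correct power $p^{-1/2}$ rather than a weaker $p^{-1}$ --- this last point, which is the real reason the exponent in the theorem is $1/2$, is where I would expect to spend the most effort.
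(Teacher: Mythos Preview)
The present paper does not prove this theorem at all: it is quoted as a black box from \cite{1808.08940}, with the one-line attribution that it is ``proven in \cite{1808.08940} using a variation on the methods of Aizenman, Kesten, and Newman \cite{MR901151}''. There is therefore no proof here to compare yours against; what you have written is an attempted reconstruction of the argument of \cite{1808.08940} itself.

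As such a reconstruction, your outline is on the right track --- the ghost reduction of Step~1 is correct, and the exploration-martingale scheme of Step~2 is indeed the mechanism of \cite{1808.08940} --- but it is still only an outline. The substantive content of the proof lives precisely in the two passages you wave through: the assertion that ``the halting device keeps the remaining ($\omega_p$-measurable) fluctuation of $\bE_p[Y\mid\omega_p]$ of comparable size'', and the ``finite-energy modification argument'' that is supposed to convert the martingale-increment lower bound into a count of $\mathscr{T}_{e,h}$-edges with the right power of $p$. Neither is carried out, and you yourself identify the second as the hard part. In \cite{1808.08940} these steps require a specific choice of exploration order and functional, together with a careful Cauchy--Schwarz/optional-stopping argument, none of which can be read off from your sketch. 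So: correct strategy, consistent with what the paper cites, but the proof is not yet written.
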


Next, an insertion-tolerance argument \cite[Equation 4.2]{1808.08940} is used to bound the tail of the volume in terms of the two-point function and the probability of $\mathscr{S}_{e,n}$ as follows. We define $\kappa_p(k) = \inf\{\tau_p(u,v) : u,v\in V, d(u,v)\leq k\}$, where $d(u,v)$ denotes the graph distance, and define $P_p(n) = \inf_{v\in V}\bP_p(|E(K_v)| \geq n)$.

\begin{lemma}
\label{lem:surgery}
 Let $G$ be a connected, locally finite graph. Then
\[
P_p(n)^2 \leq \kappa_p(k) + \left[ \sum_{i=0}^{k-1} p^{-i}\right] \sup_{e\in E} \bP_p(\mathscr{S}_{e,n})
\]
for every $0\leq p <p_c$, $n\geq 1$ and $k\geq 1$.  
\end{lemma}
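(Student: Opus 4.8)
Throughout write $\mathscr{E}_{u,v,n}$ for the event that $u$ and $v$ lie in distinct clusters each touching at least $n$ edges; since $p<p_c$ every cluster is finite almost surely, so on $\mathscr{E}_{u,v,n}$ both $K_u$ and $K_v$ are automatically finite. The plan is to combine a Harris--FKG decomposition with a surgery (insertion--tolerance) argument run along a geodesic. For the first ingredient: for any $u,v\in V$ the events $\{|E(K_u)|\ge n\}$ and $\{|E(K_v)|\ge n\}$ are increasing, so by Harris's inequality $\bP_p(|E(K_u)|\ge n,\ |E(K_v)|\ge n)\ge P_p(n)^2$, and splitting according to whether or not $u\leftrightarrow v$,
\[
P_p(n)^2 \;\le\; \bP_p\bigl(|E(K_u)|\ge n,\ |E(K_v)|\ge n,\ u\leftrightarrow v\bigr)+\bP_p(\mathscr{E}_{u,v,n}) \;\le\; \tau_p(u,v)+\bP_p(\mathscr{E}_{u,v,n}).
\]
Given $\varepsilon>0$, pick $u,v$ with $d(u,v)\le k$ and $\tau_p(u,v)<\kappa_p(k)+\varepsilon$ (possible by definition of $\kappa_p(k)$; we may take $u\ne v$, as $\tau_p(u,u)=1$), so that $\bP_p(\mathscr{E}_{u,v,n})\ge P_p(n)^2-\kappa_p(k)-\varepsilon$.

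For the second ingredient, fix a geodesic $w_0=u,w_1,\dots,w_\ell=v$ with $\ell=d(u,v)\le k$ and edges $e_i=\{w_{i-1},w_i\}$. On $\mathscr{E}_{u,v,n}$ let $\alpha(\omega)\in\{0,\dots,\ell-1\}$ be the \emph{largest} index with $w_\alpha\in K_u$ (well defined since $u\in K_u$ and $v\notin K_u$); then $e_{\alpha+1}$ is closed and $w_{\alpha+1},\dots,w_\ell\notin K_u$. Declare the $\ell-\alpha-1$ edges $e_{\alpha+2},\dots,e_\ell$ open, obtaining $\omega'$. In $\omega'$ the cluster of $w_\alpha$ is still exactly $K_u$ (the added edges are incident only to the vertices $w_{\alpha+1},\dots,w_\ell$, none of which is in $K_u$), so it is finite and touches $\ge n$ edges; the cluster of $w_{\alpha+1}$ in $\omega'$ is the union of the $\omega$-clusters of $w_{\alpha+1},\dots,w_\ell$, which contains $K_v$ (so it touches $\ge n$ edges), is a finite union of finite clusters, and is disjoint from $K_u$; and $e_{\alpha+1}$ is still closed. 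Hence $\omega'\in\mathscr{S}_{e_{\alpha+1},n}$.

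Now the event $\{\omega:\omega'\in\mathscr{S}_{e_{j+1},n}\}$ is measurable with respect to the edges outside $\{e_{j+2},\dots,e_\ell\}$, and forcing those $\ell-j-1$ edges open has probability $p^{\ell-j-1}$ independently of it; since every $\omega\in\mathscr{E}_{u,v,n}\cap\{\alpha(\omega)=j\}$ lies in that event, we get $\bP_p(\mathscr{E}_{u,v,n}\cap\{\alpha(\omega)=j\})\le p^{-(\ell-1-j)}\bP_p(\mathscr{S}_{e_{j+1},n})$ for $0\le j\le\ell-1$. Summing over $j$, reindexing by $i=\ell-1-j$, and using $\ell\le k$,
\[
\bP_p(\mathscr{E}_{u,v,n})\;\le\;\Bigl[\textstyle\sum_{i=0}^{\ell-1}p^{-i}\Bigr]\sup_{e\in E}\bP_p(\mathscr{S}_{e,n})\;\le\;\Bigl[\textstyle\sum_{i=0}^{k-1}p^{-i}\Bigr]\sup_{e\in E}\bP_p(\mathscr{S}_{e,n}).
\]
Combining with the first ingredient gives $P_p(n)^2-\kappa_p(k)-\varepsilon\le\bigl[\sum_{i=0}^{k-1}p^{-i}\bigr]\sup_{e}\bP_p(\mathscr{S}_{e,n})$ for every $\varepsilon>0$; letting $\varepsilon\downarrow0$ yields the lemma.

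The substantive step is the surgery. The key choice is that $w_\alpha$ is the \emph{last} geodesic vertex in $K_u$: this guarantees that attaching $w_{\alpha+1},\dots,w_\ell$ to $v$ cannot reconnect the two sides of $e_{\alpha+1}$, so the modified configuration genuinely witnesses $\mathscr{S}_{e_{\alpha+1},n}$ (one must check both that $K_u$ is left untouched and that the enlarged cluster stays finite, which is where $p<p_c$ is used). The insertion--tolerance accounting must then produce exactly the geometric weight $p^{-(\ell-1-j)}$ at cut position $j$, so that summing over all possible cut positions along the geodesic contributes the factor $\sum_{i=0}^{k-1}p^{-i}$ and nothing larger.
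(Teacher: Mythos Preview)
Your argument is correct and is precisely the insertion--tolerance surgery of \cite[Equation 4.2]{1808.08940} that the paper defers to (the paper itself does not supply a proof of this lemma). One point to flag explicitly: the inequality you actually establish is
\[
P_p(n)^2-\kappa_p(k)\;\le\;\Bigl[\,\sum_{i=0}^{k-1}p^{-i}\Bigr]\sup_{e\in E}\bP_p(\mathscr{S}_{e,n}),
\]
with the geometric factor on the \emph{right}, whereas the displayed statement of the lemma places it on the left. The displayed form is a typo: it is your version that is used in the proof of \cref{thm:quantquasi}, where the authors rearrange to obtain $P_p(n)^2\le C\bigl[\sum_{i=0}^{k-1}p^{-i}\bigr]\bigl[(1-p)/(pn)\bigr]^{1/2}+\kappa_p(k)$, and indeed the displayed inequality cannot hold as written (for fixed $n$ with $P_p(n)>0$ and $p<p_c$ one has $\kappa_p(k)\to 0$ while $\sum_{i=0}^{k-1}p^{-i}\to\infty$ as $k\to\infty$). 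So you have proved exactly what is needed.
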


Combining \cref{thm:twoghosttransitive,lem:surgery} allows us to convert bounds on $\kappa_p(k)$ into bounds on $P_p(n)$ when $G$ is transitive and unimodular. For graphs of exponential growth, this was enough to conclude a bound of the form $P_{p_c}(n)\preceq n^{-\delta}$ using the exponential two-point function bound $\kappa_{p_c}(k)\leq \operatorname{gr}(G)^{-k}$ that was proven in \cite{Hutchcroft2016944}.

In our setting, however, we do not have any non-trivial \emph{a priori} control of the rate of  decay of $\kappa_{p_c}(k)$. (Indeed, if we had such control we would already know that there is no percolation at $p_c$!) We circumvent this issue using the following bootstrapping procedure. We first prove via classical random walk techniques that if a transitive graph satisfies \eqref{ass:HK} for some $c>0$ and $0<\gamma \leq 1$ then there exists $c'>0$ such that the estimate
\[
\Pr_{\mu_A}(X_k \in A) \leq \exp\left[-c' \min\left\{k^\gamma,\, \frac{k}{\log^\alpha |A|}\right\}
\right]
\]
holds for every finite set $A\subset V$ and $k\geq 0$, where $\alpha = (1-\gamma)/\gamma$ and $\Pr_{\mu_A}$ denotes the law of the random walk $(X_k)_{k\geq 0}$ started from a uniformly random vertex of $A$. This is done in \cref{sec:randomwalk}. Taking expectations, this gives in the transitive unimodular case that
\[
\kappa_p(k) \leq \bE_p\left[\Pr_{\rho}(X_k \in K_\rho)\right] = \bE_p\left[\Pr_{\mu_{K_\rho}}(X_k \in K_\rho)\right] \leq \bE_p \exp\left[-c' \min\left\{k^\gamma,\, \frac{k}{\log^\alpha |K_\rho|}\right\}\right],
\]
where the central equality follows from the mass-transport principle. 
Thus, we now have methods both for converting bounds on $\kappa_p$ into bounds on $P_p$ and vice versa, so that in particular we can convert one bound on $P_p(n)$ into another via an intermediate bound on $\kappa_p(k)$.


\usetikzlibrary{shapes,arrows}

\tikzstyle{decision} = [diamond, draw, fill=blue!20, 
    text width=4.5em, text badly centered, node distance=3cm, inner sep=0pt]
\tikzstyle{block} = [rectangle, draw, fill=blue!20, 
    text width=5em, text centered, rounded corners, minimum height=4em]
\tikzstyle{line} = [draw, -latex']
\tikzstyle{cloud} = [draw, ellipse,fill=red!20, node distance=3cm,
    minimum height=2em]

\begin{figure}
\centering
\includegraphics[width=\textwidth]{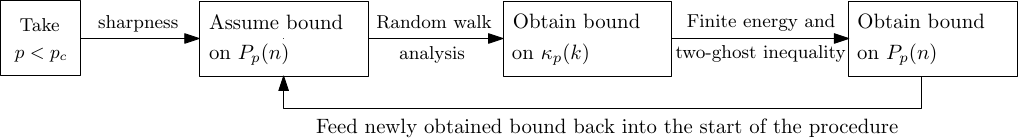}
\caption{Schematic illustration of the bootstrapping procedure used (implicitly) in the proof of \cref{thm:quanttransitive}.}
\label{fig:bootstrap}
\end{figure}


On the other hand, we know by sharpness of the phase transition \cite{MR852458,aizenman1987sharpness,duminil2015new} that $\bE_p|K_\rho|<\infty$ for every $0\leq p < p_c$, and consequently that for each $0\leq p < p_c$ there exists a constant $C_p$ such that $P_p(n)\leq C_p n^{-1}$ for every $n\geq 1$. 
To conclude the proof, it suffices to show that if we start with this bound and iteratively obtain new bounds on $P_p(n)$ using the above method, then in the case $\gamma>1/2$ we obtain in the limit a bound on $P_p(n)$ that decays as $n\to\infty$ and holds uniformly on the whole range $0 \leq p < p_c$, as the same bound must then hold at $p_c$ by an elementary continuity argument. See \cref{fig:bootstrap} for a  schematic outline. (A discussion of how this proof strategy breaks down in the case $\gamma < 1/2$ is given in \cref{remark:obstacle}.) Rather than carrying out such a procedure explicitly, we instead use a similar method to prove a bound of the form
\[
\bE_p\exp\left[\log^\beta |K_\rho|\right] \leq C_\beta \sqrt{\bE_p\exp\left[\log^\beta |K_\rho|\right]}
\]
 for each $p_c/2 \leq p < p_c$ and $0\leq  \beta < (2\gamma-1)/\gamma$, which conveniently encapsulates this bootstrapping scheme and easily allows us to conclude the proof.

\section{Background on unimodularity and the mass-transport principle}
\label{sec:unimodular}

We now briefly review the notion of unimodularity and the mass-transport principle, referring the reader to \cite[Chapter 8]{LP:book} for further background. Let $G=(V,E)$ be a connected, locally finite graph and let $\Aut(G)$ be the group of automorphisms of $G$. We write $[v]=\{\gamma v : v \in \Aut(G)\}$ for the orbit of a vertex $v\in V$ under $\Aut(G)$ and say that $G$ is \textbf{unimodular} if $|\stab_u v|=|\stab_v u|$ for every $u,v\in V$ with $[u]=[v]$, where $\stab_u = \{\gamma \in \Aut(G) : \gamma u = u\}$ is the stabilizer of $u$ in $\Aut(G)$ and $\stab_u v = \{ \gamma v : \gamma \in \stab_u\}$ is the orbit of $v$ under $\stab_u$. Every Cayley graph and every amenable quasi-transitive graph is unimodular \cite{MR1082868}. 

Suppose that $G$ is a connected, locally finite, transitive unimodular graph. Then $G$ satisfies the \textbf{mass-transport principle}, which states that for every $F:V^2\to [0,\infty]$ that is diagonally-invariant in the sense that $F(\gamma u, \gamma v) = F(u,v)$ for every $u,v\in V$ and $\gamma \in \Aut(G)$, we have that
\[
\sum_{v\in V} F(\rho,v) = \sum_{v\in V} F(v,\rho)
\]
whenever $\rho$ is an arbitrarily chosen root vertex of $G$. More generally, suppose that $G$ is a connected, locally finite, \emph{quasi-transitive} unimodular graph, and let $\cO \subseteq V$ be a set of orbit representatives of the action of $\Aut(G)$. That is, $\cO$ is such that for every $v\in V$ there exists a unique $o\in \cO$ such that $[v]=[o]$. Then there exists a unique probability measure $\mu$ on $\cO$ such that the identity
\[
\sum_{o\in \cO}\sum_{v\in V} F(o,v) \mu(o) = \sum_{o\in \cO}\sum_{v\in V} F(v,o) \mu(o) 
\]
holds for every diagonally invariant $F:V^2\to[0,\infty]$. In other words, if we choose a root $\rho\in V$ according to the measure $\mu$ then $(G,\rho)$ is a \emph{unimodular random rooted graph} in the sense of \cite{AL07}. 
Similarly, if we choose $\rho$ according to the degree-biased probability measure defined by
\[\tilde \mu(o) = \frac{\mu(o)\deg(o)}{\sum_{o'\in\cO} \mu(o') \deg(o')} \qquad o\in \cO\]
then the random $(G,\rho)$ is a \emph{reversible random rooted graph} in the sense of \cite{BC2011} (we will not make substantial use of these notions so we omit the definition). This gives rise to the following generalization of the two-ghost inequality to the quasi-transitive case, see \cite[Remark 6.1]{1808.08940}.

\begin{theorem}
\label{thm:twoghostquasitransitive}
Let $G=(V,E)$ be a connected, locally finite, unimodular quasi-transitive graph. Then
\[
\sum_{o\in \cO}\mu(o) \sum_{e^- = o}\bP_p(\mathscr{S}_{e,n}) \leq
66 \left[\sum_{o\in \cO} \mu(o) \deg(o) \right] \left[\frac{1-p}{pn}\right]^{1/2}
\]
for every $p\in [0,1]$ and $n \geq 1$.
\end{theorem}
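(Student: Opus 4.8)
The plan is to run the proof of the transitive two-ghost inequality (\cref{thm:twoghosttransitive}) from \cite{1808.08940} --- itself a variation on the Aizenman--Kesten--Newman argument \cite{MR901151} --- essentially verbatim, but with the root chosen at random so as to make the underlying rooted graph reversible. Recall the shape of that argument: fixing an oriented edge $e$, one declares $e$ to be closed, runs a simultaneous breadth-first exploration of the clusters of $e^-$ and $e^+$ in $\omega_p$, and builds a martingale $(M_t)_{t\ge 0}$ adapted to the exploration filtration whose increments are bounded in terms of the vertex degrees and which, on the event $\mathscr{S}_{e,n}$, must have accumulated a definite displacement by the time both explorations have revealed at least $n$ edges; an $L^2$ maximal inequality then bounds $\bP_p(\mathscr{S}_{e,n})$ by a constant multiple of $\deg \cdot [(1-p)/(pn)]^{1/2}$. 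A crucial symmetry input is that the roles of $e^-$ and $e^+$ can be interchanged, which in the transitive case is immediate.

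To adapt this, choose the root $\rho$ according to the degree-biased measure $\tilde\mu$, so that $(G,\rho)$ is a reversible random rooted graph, and then pick a uniformly random oriented edge $e$ with $e^- = \rho$. Running the exploration described above from this random $e$, the two places where the transitive proof invokes transitivity --- (i) that the law of the explored environment does not depend on which vertex one starts from, and (ii) the $e^- \leftrightarrow e^+$ symmetrization --- are precisely the statements supplied, in the present generality, by reversibility of $(G,\rho)$ and the mass-transport principle: reversibility says exactly that the environment seen from a uniformly chosen endpoint of a $\tilde\mu$-random edge is invariant under swapping the two endpoints, and the mass-transport principle legitimizes the averages over the root. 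With these substitutions the martingale construction, its increment bounds, and the maximal inequality carry over unchanged, yielding
\[
\bE\bigl[\bP_p(\mathscr{S}_{e,n}) \bigm| \rho, e\bigr] \le 82 \left[\frac{1-p}{pn}\right]^{1/2}
\]
for $e$ chosen this way. Unwinding the degree-biasing --- that is, writing the left-hand side as $\bigl[\sum_{o\in\cO}\mu(o)\deg(o)\bigr]^{-1}\sum_{o\in\cO} \mu(o)\sum_{e^- = o}\bP_p(\mathscr{S}_{e,n})$ --- turns this into the asserted inequality, with the average degree $\sum_{o\in\cO}\mu(o)\deg(o)$ appearing in the place occupied by $d$ in \cref{thm:twoghosttransitive}.

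The main thing to verify, and the only genuine obstacle, is that every appeal to transitivity in the proof of \cref{thm:twoghosttransitive} is truly of this ``stationary/reversible'' type, rather than relying on some stronger structural feature (such as an automorphism carrying one vertex to another along a prescribed geodesic); going through that argument line by line confirms this, which is presumably why \cite{1808.08940} records the quasi-transitive version merely as a remark. Aside from this, the generalization is bookkeeping, the one mildly delicate point being to track the degree-dependent constant through the randomized choice of root and edge so that the same numerical constant $82$ emerges on the right-hand side.
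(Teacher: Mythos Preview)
Your proposal is correct and matches the paper's approach exactly: the paper does not give a self-contained proof but simply notes that choosing the root according to the degree-biased measure $\tilde\mu$ makes $(G,\rho)$ a reversible random rooted graph and then cites \cite[Remark 6.1]{1808.08940}, which is precisely the observation you spell out---that the transitive two-ghost argument goes through verbatim once transitivity is replaced by reversibility of the random root and the mass-transport principle, with the average degree $\sum_{o\in\cO}\mu(o)\deg(o)$ appearing in place of $d$.
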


\section{Random walk analysis}
\label{sec:randomwalk}

The goal of this section is to prove the following inequality regarding simple random walk on graphs satisfying \eqref{ass:HK}, which will play an important role in the proof of our main theorems. 
Given a locally finite graph $G=(V,E)$ and a finite set $D \subseteq V$, we write $\mu_D$ for the uniform measure on $D$. For each probability measure $\mu$ on $V$, we also write $\Pr_{\mu}$ and $\mathrm{E}_{\mu}$ for probabilities and expectations taken with respect to the law of a simple random walk $(X_k)_{k\geq 0}$ started at a vertex drawn from the measure $\mu$. 

\begin{prop}
\label{cor:SP}
Let $G=(V,E)$ be an infinite, connected graph with degrees bounded by  $M$ satisfying \eqref{ass:HK} for some $c>0$ and $0<\gamma \leq 1$, and let $\alpha=(1-\gamma)/\gamma$. Then there exists a positive constant $c_1=c_1(\gamma,c,M)$ such that
\begin{equation}
\label{e:eSPcor}
\Pr_{\mu_D}\bigl(X_k \in D \bigr) \leq   \left[\max_{u,v \in D } \frac{\deg(u)}{\deg(v)}\right]^{1/2} \exp\left[ - c_1 \min\left\{\frac{k}{\log^\alpha |D|},\, k^{\gamma}\right\} \right]\end{equation}
 for every finite set $D\subset V$ and every $k\geq 0$.
\end{prop}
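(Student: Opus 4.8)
Set $k_\star:=\log^{1/\gamma}|D|$; since $\alpha/(1-\gamma)=1/\gamma$, the two quantities $k/\log^\alpha|D|$ and $k^\gamma$ cross exactly at $k=k_\star$, so that $\min\{k/\log^\alpha|D|,\,k^\gamma\}$ equals $k/\log^\alpha|D|$ for $k\le k_\star$ and $k^\gamma$ for $k\ge k_\star$. Using $\exp[-c_1\min\{a,b\}]=\max\{\exp[-c_1a],\exp[-c_1b]\}$, it therefore suffices to prove the stated estimate with $k/\log^\alpha|D|$ in the exponent for all $k\le C k_\star$ and with $k^\gamma$ in the exponent for all $k\ge C k_\star$, where $C$ is a large absolute constant and the two ranges overlap. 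One reduces at the outset to $|D|\ge 2$ and $k\ge 1$, the remaining cases being trivial.

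\textbf{The large-$k$ regime.}
Reversibility together with the Cauchy--Schwarz inequality gives $p_k(u,v)\le(\deg(v)/\deg(u))^{1/2}\sqrt{p_{2k}(u,u)}$, so by \eqref{ass:HK} (applied at time $2k$) there is $c'=c'(\gamma,c)>0$ with $p_k(u,v)\le (\deg(v)/\deg(u))^{1/2}\exp[-c'k^\gamma]$. Summing over $v\in D$ yields $\Pr_{\mu_D}(X_k\in D)\le |D|\bigl[\max_{u,v\in D}\deg(u)/\deg(v)\bigr]^{1/2}\exp[-c'k^\gamma]$, and once $k\ge Ck_\star$ (so that $c'k^\gamma\ge 2\log|D|$) the factor $|D|$ is absorbed: $|D|\exp[-c'k^\gamma]\le\exp[-\tfrac12 c'k^\gamma]$. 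Taking $c_1\le c'/2$ settles this regime.

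\textbf{The small-$k$ regime: Faber--Krahn and confinement.}
The first input is a Faber--Krahn inequality extracted from \eqref{ass:HK}: there is $c_2=c_2(\gamma,c)>0$ so that for every finite $\Omega$ with $|\Omega|\ge 2$ the bottom Dirichlet eigenvalue $\lambda_1(\Omega):=\inf\{\mathcal E(f,f)/\|f\|_{\ell^2(\deg)}^2:\operatorname{supp}f\subseteq\Omega\}$ satisfies $\lambda_1(\Omega)\ge c_2/\log^\alpha|\Omega|$. This is classical: comparing $\operatorname{Tr}\bigl(e^{-t\mathcal L^\Omega}\bigr)=\sum_i e^{-t\lambda_i(\Omega)}\ge e^{-t\lambda_1(\Omega)}$ with $\operatorname{Tr}\bigl(e^{-t\mathcal L^\Omega}\bigr)\le|\Omega|\sup_x p^{\mathrm{cts}}_t(x,x)\le C|\Omega|\exp[-c't^\gamma]$ (the last bound obtained from \eqref{ass:HK} after subordinating to a Poisson clock) and optimising over $t$ produces precisely the exponent $\alpha$. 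A standard consequence, using that the killed operator on $D$ has spectral radius $1-\lambda_1(D)$ and that $\mu_D$ is comparable to the stationary measure restricted to $D$, is the \emph{confinement bound} $\Pr_{\mu_D}(\tau_D>m)\le\bigl(\max_D\deg/\min_D\deg\bigr)\exp[-\lambda_1(D)m]$ for the first exit time $\tau_D$, crucially with no polynomial-in-$|D|$ prefactor.

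\textbf{The small-$k$ regime: from confinement to return.}
The delicate point is to upgrade the confinement bound to a bound on the return probability $\Pr_{\mu_D}(X_k\in D)$, which is genuinely larger because the walk may leave $D$ and re-enter cheaply through its boundary. I would decompose $\{X_k\in D\}$ according to the start $a$ of the maximal interval of times $j\le k$ ending at $k$ with $X_j\in D$, obtaining the exact identity
\[
\Pr_{\mu_D}(X_k\in D)=\Pr_{\mu_D}(\tau_D>k)+\sum_{a=1}^{k}\sum_{z\in D}\rho_a(z)\,\Pr_z(\tau_D>k-a),
\qquad \rho_a(z):=\Pr_{\mu_D}(X_{a-1}\notin D,\ X_a=z),
\]
where each $\rho_a$ is a sub-probability measure on $D$. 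The key a priori estimate, from reversibility, is the pointwise bound $\Pr_{\mu_D}(X_a=z)\le\bigl(\max_D\deg/\min_D\deg\bigr)/|D|$ for every $z$ and $a$, i.e.\ $\rho_a$ is dominated by a fixed multiple of $\mu_D$; combined with the confinement bound this controls each re-entry term by $\exp[-\lambda_1(D)(k-a)]$ up to degree factors, while the total mass $\|\rho_a\|=\Pr_{\mu_D}(X_{a-1}\notin D,X_a\in D)\le\Pr_{\mu_D}(X_a\in D)$ is controlled for $a$ near $k$ by the estimate one is proving at earlier times. Running an induction on $k$, bounding the re-entry terms with $a$ far from $k$ by the confinement decay and those with $a$ near $k$ by the inductive hypothesis, and splitting the sum at $a=k-O(\log^\alpha|D|)$ while using $\lambda_1(D)\ge c_2/\log^\alpha|D|$, one aims to make both pieces at most a fixed fraction of the target $\exp[-c_1k/\log^\alpha|D|]$ provided $c_1$ is small relative to $c_2$ and the degree ratio. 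I expect this last step --- making the re-entry summation converge at the rate $\lambda_1(D)\asymp\log^{-\alpha}|D|$, uniformly in $D$ and in $k\le Ck_\star$ --- to be the main obstacle; the contributions of the re-entries at times very close to $k$ are the dangerous ones, and handling them may require a buffered version of the decomposition (or, equivalently, replacing $D$ by an $\epsilon$-enlargement $\{z:\Pr_z(X_k\in D)>\epsilon\}$, which has volume $\le \operatorname{vol}(D)/\epsilon$ and hence still admits a Faber--Krahn estimate with the same exponent). The remaining ingredients --- the Cauchy--Schwarz heat-kernel bound, the Faber--Krahn inequality, and the confinement corollary --- are routine.
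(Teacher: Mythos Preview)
Your large-$k$ argument and the Faber--Krahn and confinement ingredients are correct, but the small-$k$ step has a real gap --- the one you yourself flag as ``the main obstacle'' --- and I do not see how to close it along the lines you suggest. Write $\lambda:=\lambda_1(D)\asymp\log^{-\alpha}|D|$ and $F(k):=\Pr_{\mu_D}(X_k\in D)$. Your two bounds on the $a$th re-entry term are $R^{3/2}e^{-\lambda(k-a)}$ (from the pointwise domination $\rho_a\le R\mu_D$ together with confinement) and $\|\rho_a\|\le F(a)$ (discarding $\Pr_z(\tau_D>k-a)\le 1$). However you split the sum at $k-T$: if $T$ is bounded then the far piece $\sum_{a\le k-T}R^{3/2}e^{-\lambda(k-a)}\asymp R^{3/2}/\lambda$ does not decay in $k$; if instead $T$ is large enough to make the far piece smaller than the target $Ce^{-c_1\lambda k}$ (which forces $T\gtrsim c_1 k$), then the near piece $\sum_{a>k-T}F(a)$ exceeds the target by a factor that grows with $k$. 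Either way the induction fails. The enlargement $D_\epsilon=\{z:\Pr_z(X_m\in D)>\epsilon\}$ does not rescue this, because the time $m$ would have to match $k-a$ for each re-entry separately, so no single buffered domain serves for all $a$. The underlying problem is that confinement concerns the killed walk and discards all information about where the walk goes after leaving $D$; that information is exactly what controls re-entry.

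The paper's argument avoids re-entries altogether by tracking the $\ell^2(1/\pi)$-norm of the \emph{non-killed} evolved measure $\mu_D P^k$, which is monotone in $k$. With $\phi_k:=P^k(\mu_D/\pi)\ge 0$ one has the exact identity $\|\phi_k\|_{2,\pi}^2-\|\phi_{k+1}\|_{2,\pi}^2=\langle (I-P^2)\phi_k,\phi_k\rangle_\pi$, and the key Nash-type lemma --- your Faber--Krahn bound applied not to $D$ but to the super-level set $\{\phi_k\ge\|\phi_k\|_{2,\pi}^2/4\|\phi_k\|_{1,\pi}\}$ --- gives
\[
\frac{\langle (I-P^2)\phi,\phi\rangle_\pi}{\|\phi\|_{2,\pi}^2}\;\ge\;\tfrac12\,\Lambda\Bigl(4\|\phi\|_{1,\pi}^2\big/\|\phi\|_{2,\pi}^2\Bigr)
\qquad\text{for every }\phi\ge 0.
\]
Since $\|\phi_k\|_{1,\pi}\le 1$ for all $k$, this is a closed differential inequality for $\|\phi_k\|_{2,\pi}$: the norm halves in at most $O\bigl(\Lambda(4^{i+1}\|\mu_D\|_{2,1/\pi}^{-2})^{-1}\bigr)$ steps at the $i$th stage, and under \eqref{ass:SP} the time to reach level $2^{-\ell}$ is $O(\ell^{1+\alpha}+\ell\log^\alpha|D|)$. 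A final Cauchy--Schwarz $\Pr_{\mu_D}(X_k\in D)^2\le\pi(D)\,\|\mu_D P^k\|_{2,1/\pi}^2$ then yields the proposition. This is the spectral-profile method of Goel--Montenegro--Tetali transplanted to infinite volume; the evolving $\ell^2$-norm quietly encodes the spread of the walk outside $D$, which is precisely what your decomposition throws away.
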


We expect that much of the content of this section will have been known as folklore by experts in random walks, but \cref{cor:SP} has not, to our knowledge,  previously appeared in the literature. Indeed, \cref{cor:SP} will be deduced from a more general estimate, \cref{cor:spcor}, which is a direct analogue in the infinite-volume setting of the $L^{\infty}$ mixing time bounds of Goel, Montenegro, and Tetali \cite{MR2199053}. 

The proof of \cref{cor:SP} will apply the notion of the \emph{spectral profile}, which we now introduce.
Let $G=(V,E)$ be an infinite, connected, locally finite graph, and let $P$ be the transition matrix of the simple random walk  
 $(X_k)_{k=0}^{\infty}$ on $G$. 
For each finite set $A \subset V $ we define $P_A$ to be the substochastic transition matrix of the  random walk that is killed upon exiting $A$, which is given explicitly by $P_A(u,v)=P(u,v)\mathbbm{1}(u,v\in A)$, and define $\la(A) $ to be the smallest eigenvalue of $I_A-P_A^{2}$, where $I_A(u,v)=\mathbbm{1}(u=v, u \in A)$ and where we write $P_A^i$ for $(P_A)^{i}$. Let $\pi $ be the measure on $V$ which assigns each $v$ mass $\deg v$.
 We define the \textbf{spectral profile} of $G$ to be the function $\Lambda: \N \to (0,1]$ given by
 \[ \Lambda(L):=\inf \bigl\{\la(B):B \subset V \text{ such that }  \pi(B) \leq L \bigr\} \]
 if $L\geq \min_{v \in V} \pi(v)$ and $\Lambda(L)=1$ otherwise.  Given $c>0$ and $\alpha\geq 0$, we say that a bounded degree graph satisfies \eqref{ass:SP} if 
\begin{equation}
\label{ass:SP}
\tag{$\mathrm{SP}_{\alpha,c}$}
\Lambda(x)^{-1} \leq \frac{1}{c} \log^\alpha \biggl[\frac{x}{\max_v \pi (v)} \biggr] \qquad \text{for every $x \geq 2 \max_v \pi(v)$.}
\end{equation}
(The normalization by the maximal degree has been included in order to simplify various calculations below.)

\begin{remark} 
\label{rem:nonstandardchoice}
We remark that our definition of the spectral profile is slightly non-standard. Indeed, when considering the \emph{continuous-time} random walk, one considers the smallest eigenvalue of $I_A-P_A$ rather than of $I_A-P_A^2$ as we do here. It turns out however that using $I_A-P_A^2$ is more natural in the discrete-time setting. A simple application of the Perron-Frobenius theorem shows that the two definitions differ by at most a factor of two. 
%
%
\end{remark}


\begin{remark}
\label{remark:isoperimetry}
Many readers will be more familiar with the \emph{isoperimetric profile} than with the spectral profile. We now briefly recall the relationship between these two profiles for their convenience; we will not apply the isoperimetric profile in the subsequent analysis. Let $G$ be an infinite, locally finite graph. Its isoperimetric profile $(\Phi_*(x))_{x\geq 1}$ is defined to be
\[
\Phi_*(x) = \inf\Bigl\{ \frac{1}{\pi(A)}\sum_{a\in A, b\in V \setminus A} \pi(a)P(a,b) : A \subset V,\, \pi(A) \leq x\Bigr\}.
\]
  A simple variation on Cheeger's inequality yields that
 \begin{equation}
\label{e:cheeger}
 \frac{1}{4}\Phi_{*}^2(x) \leq \Lambda(x) \leq \Phi_{*}(x)  \end{equation}   
 for every $x \geq \min_{v\in V} \pi(v)$; see the proof of \cite[Lemma 2.44]{MR2199053}. (Here we have a $1/4$ rather than the usual $1/2$ in the first inequality due to our nonstandard definition of $\Lambda$.)
\end{remark}

The next proposition states that if $\alpha=(1-\gamma)/\gamma$ then \eqref{ass:HK} and \eqref{ass:SP} are equivalent to within a controlled  change of the constant $c$.

\begin{prop}
\label{prop:equiv}
Let $0< \gamma \leq 1$ and let $\alpha=(1-\gamma)/\gamma$. Then for every $c>0$ and $M<\infty$ there exists $c_2=c_2(\gamma,c,M)$ such that the following hold for every connected, locally finite graph $G$ with maximum degree at most $M$:
\begin{enumerate}
	\item If $G$ satisfies 
 \eqref{ass:HK}, then $G$ satisfies $(\mathrm{SP}_{\alpha,c_2})$.
 \item
 If $G$ satisfies 
 \eqref{ass:SP}, then $G$ satisfies $(\mathrm{HK}_{\gamma,c_2})$.
\end{enumerate}
\end{prop}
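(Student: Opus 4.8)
The plan is to prove the two implications separately; each is a standard equivalence between on-diagonal heat-kernel decay and a Faber--Krahn estimate for the spectral profile, and the only real work will be tracking how the constant degrades and handling the normalisation by $\pi_{\max}:=\max_v\pi(v)\in[1,M]$. As a preliminary, I would note that a connected graph satisfying either hypothesis is necessarily infinite (a finite graph has $\Lambda(x)=0$ for $x\ge\pi(V)$ and so fails $(\mathrm{SP}_{\alpha,c})$, and cannot satisfy $(\mathrm{HK}_{\gamma,c})$ either), so we may assume $G$ is infinite; then every vertex has a neighbour of degree $\ge2$, so $p_2(v,v)=\tfrac1{\deg v}\sum_{w\sim v}\tfrac1{\deg w}\le 1-\tfrac1{2M}$, and hence $p_m(v,v)\le(1-\tfrac1{2M})^{1/2}$ for all $m\ge1$ by reversibility --- this handles all ``small $m$'' cases below.

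For $(\mathrm{HK}_{\gamma,c})\Rightarrow(\mathrm{SP}_{\alpha,c_2})$, I would fix finite $B$ with $2\le\pi(B)\le x$ (the case $\pi(B)<2$ is trivial since then $\lambda(B)=1$), take $f\ge0$ a Perron eigenvector of $(P_B)^2$ with $\|f\|_{L^2(\pi)}=1$, and use that $(P_B)^{2n}(u,w)\le P^{2n}(u,w)$ entrywise together with self-adjointness of $P$ on $L^2(\pi)$ to write
\[
(1-\lambda(B))^n=\langle(P_B)^{2n}f,f\rangle_\pi\le\langle P^{2n}f,f\rangle_\pi=\|P^nf\|_{L^2(\pi)}^2\le\|P^nf\|_{L^\infty}\|P^nf\|_{L^1(\pi)}.
\]
Since $P$ contracts $L^1(\pi)$, $\|P^nf\|_{L^1(\pi)}\le\|f\|_{L^1(\pi)}\le\pi(B)^{1/2}$; and Cauchy--Schwarz in $L^2(\pi)$ with the identity $\sum_w p_n(v,w)^2/\deg w=p_{2n}(v,v)/\deg v$ gives $\|P^nf\|_{L^\infty}\le\sup_v(p_{2n}(v,v)/\deg v)^{1/2}\le e^{-cn^\gamma/2}$. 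So $(1-\lambda(B))^n\le\pi(B)^{1/2}e^{-cn^\gamma/2}$; choosing $n\asymp(\log\pi(B))^{1/\gamma}$ makes the right side $\le\pi(B)^{-1}$, which forces $\lambda(B)\gtrsim(\log\pi(B))^{1-1/\gamma}=(\log\pi(B))^{-\alpha}$ (constant depending only on $\gamma,c$; trivial if $\lambda(B)\ge\tfrac12$). Taking the infimum over such $B$ gives $\Lambda(x)\gtrsim(\log x)^{-\alpha}$, and $\log x\le C(M)\log(x/\pi_{\max})$ for $x\ge2\pi_{\max}$ converts this to $(\mathrm{SP}_{\alpha,c_2})$.

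For $(\mathrm{SP}_{\alpha,c})\Rightarrow(\mathrm{HK}_{\gamma,c_2})$, I would use the infinite-volume analogue of the Goel--Montenegro--Tetali spectral-profile bound \cite{MR2199053}. First, the factor-$2$ comparisons of \cref{rem:nonstandardchoice} (Perron--Frobenius: $\rho(P_B)$ is an eigenvalue, so $\lambda(B)=1-\rho(P_B)^2\in[1-\rho(P_B),\,2(1-\rho(P_B))]$) make $(\mathrm{SP}_{\alpha,c})$ for $\Lambda$ equivalent, up to a bounded change of $c$, to the corresponding bound for the lazy chain $\tfrac12(I+P)$, for which periodicity is no obstruction and the standard Faber--Krahn truncation goes through: with $h_n^v=p_n(v,\cdot)/\deg(\cdot)$ and $\phi_n=\|h_n^v\|_{L^2(\pi)}^2$, truncating at level $\phi_n/4$ and using contractivity of the one-step Dirichlet form under $g\mapsto(g-t)_+$ gives $\phi_n-\phi_{n+1}\gtrsim\Lambda(4/\phi_n)\phi_n$, whence (ODE comparison, substitution $x=4/\phi$) $n\lesssim\int_{\asymp\deg v}^{\asymp\deg v/p_{2n}(v,v)}dx/(x\Lambda(x))$. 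Then I would plug in $(\mathrm{SP}_{\alpha,c})$: away from a bounded initial range contributing $O_M(1)$, $\Lambda(x)^{-1}\le c^{-1}\log^\alpha(x/\pi_{\max})$, and the substitution $y=\log(x/\pi_{\max})$ bounds the integral by $\tfrac{1}{c(\alpha+1)}\log^{\alpha+1}(X/\pi_{\max})=\tfrac\gamma c\log^{1/\gamma}(X/\pi_{\max})$ since $\alpha+1=1/\gamma$, with $X\asymp\deg v/p_{2n}(v,v)$. Rearranging yields $\log(X/\pi_{\max})\gtrsim n^\gamma$ for $n$ large, i.e.\ $p_{2n}(v,v)\le\deg v\,e^{-c_3n^\gamma}$; absorbing the prefactor and passing to odd times via $p_{2n+1}(v,v)\le(p_{2n}(v,v)p_{2n+2}(v,v))^{1/2}$ then gives $(\mathrm{HK}_{\gamma,c_2})$.

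The step I expect to be the main obstacle is the discrete-time Faber--Krahn inequality underlying the second direction: one must handle the interplay of periodicity with the choice of one-step versus two-step Dirichlet form correctly, which is exactly what the nonstandard definition of the spectral profile (\cref{rem:nonstandardchoice}) is designed to make clean. Everything else is the elementary fact that $\int\log^\alpha(x/\pi_{\max})\,dx/x\asymp\log^{1/\gamma}(\cdot)$ together with careful bookkeeping of constants through $1\le\deg v\le\pi_{\max}\le M$.
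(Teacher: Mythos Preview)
Your proposal is correct. The paper does not actually prove this proposition: its entire proof consists of two citations (\cite[Lemma~2.5]{MR3572333} for item~1 and Coulhon \cite[Proposition~II.1]{MR1418518} for item~2), and what you have written is a self-contained account of precisely the standard Perron-eigenvector and Faber--Krahn/Nash arguments those references contain.

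One minor simplification for item~2: the detour through the lazy chain is unnecessary. Because the paper's $\Lambda$ is already defined via $I_A-P_A^2$, you can work directly with the original walk: setting $h_n=P^n(\delta_v/\pi)\ge 0$ one has $\phi_n-\phi_{n+1}=\langle(I-P^2)h_n,h_n\rangle_\pi$, and the truncation argument of \cref{lem:key} (which extends verbatim to $h_n\in L^2(\pi)$ since the superlevel set $\{h_n\ge\beta\}$ has finite $\pi$-mass $\le 1/\beta$, hence is finite under the degree bound) yields $\phi_n-\phi_{n+1}\ge\tfrac12\Lambda(4/\phi_n)\phi_n$ with no periodicity issue whatsoever. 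This is exactly the point of \cref{rem:nonstandardchoice}, so your closing remark identifies the right phenomenon but the fix is even cleaner than passing to the lazy chain.
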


\begin{proof}
The first item is a special case of \cite[Lemma 2.5]{MR3572333}, while the second item follows from \cite[Proposition II.1]{MR1418518}; see also \cite[Section 2]{doi:10.1093/imrn/rny034}.
\end{proof}

In light of this equivalence, it suffices to prove the following variation on \cref{cor:SP}.

\begin{prop}
\label{cor:SPb}
Let $G$ be an infinite, connected,  bounded degree graph satisfying \eqref{ass:SP} for some $\alpha \geq 0$ and $c>0$. Then there exists a  positive constant $c_1=c_1(\alpha,c)$ such that
\begin{equation}
\label{e:eSPcor}
\Pr_{\mu_D}[X_k \in D ] \leq   \left[\max_{u,v \in D } \frac{\pi(u)}{\pi(v)}\right]^{1/2} \exp\left[ - c_1 \min\left\{\frac{k}{\log^\alpha |D|},\, k^{1/(1+\alpha)}\right\} \right]\end{equation}
 for every finite set $D\subset V$ and every $t\geq 0$.
\end{prop}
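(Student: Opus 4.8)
The plan is to run the spectral-profile (``$L^\infty$-mixing-time'') machinery of Goel--Montenegro--Tetali in the infinite-volume setting, where in the absence of a stationary distribution the quantity that decays is the $\ell^2(\pi)$-norm of the density of the walk. Throughout I would write $M:=\sup_v\deg(v)<\infty$ (bounded degree being implicit in \eqref{ass:SP}, which is otherwise vacuous), and let $P$ denote the transition matrix, which is self-adjoint and a contraction on $\ell^2(\pi)$.

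\emph{Step 1: reduction to an $\ell^2(\pi)$ decay bound.} Given a finite $D$, let $u_0=\mathbbm 1_D/(|D|\,\pi)$ be the density of $\mu_D$ with respect to $\pi$ and set $u_t=P^t u_0$, so that $u_t\ge0$, $\|u_t\|_{\ell^1(\pi)}=1$, and $\Pr_{\mu_D}[X_t=x]=u_t(x)\pi(x)$. Self-adjointness, contractivity and Cauchy--Schwarz give
\[
\Pr_{\mu_D}[X_k\in D]=\langle u_{\lfloor k/2\rfloor},P^{\lceil k/2\rceil}\mathbbm 1_D\rangle_\pi\le\|u_{\lfloor k/2\rfloor}\|_{\ell^2(\pi)}\,\|\mathbbm 1_D\|_{\ell^2(\pi)}=\|u_{\lfloor k/2\rfloor}\|_{\ell^2(\pi)}\sqrt{\pi(D)},
\]
while $\pi(D)\,\|u_0\|_{\ell^2(\pi)}^2=\tfrac{\pi(D)}{|D|^2}\sum_{x\in D}\pi(x)^{-1}\le\max_{u,v\in D}\pi(u)/\pi(v)$ and, by the reverse Cauchy--Schwarz bound, $\|u_0\|_{\ell^2(\pi)}^2\ge 1/\pi(D)$. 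Since $t\mapsto\|u_t\|_{\ell^2(\pi)}$ is non-increasing, the proposition follows once I show, writing $E(t):=\|u_t\|_{\ell^2(\pi)}^2$, that $E(t)\le E(0)\exp[-2c_1\min\{t/\log^\alpha|D|,\,t^{1/(1+\alpha)}\}]$ for all $t$.

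\emph{Step 2: a one-step decay inequality.} The increments of $E$ are the Dirichlet form of the two-step chain $P^2$ (which is $\pi$-reversible):
\[
E(t)-E(t+1)=\|u_t\|_{\ell^2(\pi)}^2-\|Pu_t\|_{\ell^2(\pi)}^2=\langle(I-P^2)u_t,u_t\rangle_\pi=\tfrac12\sum_{x,y}\pi(x)P^2(x,y)\big(u_t(x)-u_t(y)\big)^2.
\]
I would then run the truncation argument on the nonnegative function $u_t$: for $r>0$, replace $u_t$ by $(u_t-r)_+$, supported on $A_r=\{u_t>r\}$ with $\pi(A_r)\le\|u_t\|_{\ell^1(\pi)}/r=1/r$; the form above decreases under the $1$-Lipschitz map $s\mapsto(s-r)_+$, and for a function supported on $A_r$ it equals $\langle(I_{A_r}-(P^2)_{A_r})(u_t-r)_+,(u_t-r)_+\rangle_\pi\ge\widetilde\lambda(A_r)\,\|(u_t-r)_+\|_{\ell^2(\pi)}^2$, where $\widetilde\lambda(A_r)$ is the smallest eigenvalue of $I_{A_r}-(P^2)_{A_r}$. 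Taking $r=E(t)/6$ makes $\|(u_t-r)_+\|_{\ell^2(\pi)}^2\ge\tfrac12 E(t)$ and $\pi(A_r)\le6/E(t)$, so $E(t)-E(t+1)\ge\tfrac12\,\widetilde\Lambda(6/E(t))\,E(t)$, where $\widetilde\Lambda$ is the spectral profile of the two-step chain $P^2$. Crucially, $\widetilde\Lambda$ is again controlled by \eqref{ass:SP}: by the heat-kernel characterisation of the spectral profile (as in \cref{prop:equiv}, applied now to the reversible chain $P^2$, whose $n$-step return probabilities equal $p_{2n}(v,v)$), the hypothesis forces $\widetilde\Lambda(x)^{-1}\le C\log^\alpha(x)$ for $x\ge C'$ with $C,C'$ depending on $\gamma,c,M$. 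Substituting this, and disposing of an initial number of steps (bounded in terms of $M$) during which $E$ decays geometrically until $E(t)\le1/C'$, I obtain a constant $c''=c''(\gamma,c,M)>0$ with $E(t)-E(t+1)\ge c''\,E(t)\,\big(1+\log^\alpha(1/E(t))\big)^{-1}$.

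\emph{Step 3: solving the difference inequality.} Put $L(t)=\log(E(0)/E(t))\ge0$. Using $1-x\le-\log x$, the previous inequality becomes $L(t+1)-L(t)\ge c''\big(1+(\log(1/E(0))+L(t))^\alpha\big)^{-1}$. Since $E(0)\ge1/\pi(D)\ge1/(|D|\max_v\pi(v))$ we have $\log(1/E(0))\le\log|D|+C''$. While $L(t)\le\log(1/E(0))$ the right-hand side is at least a positive constant times $1/\log^\alpha|D|$, so $L(t)$ grows at least linearly at rate of order $1/\log^\alpha|D|$, a phase lasting until $t$ is of order $\log^{1+\alpha}|D|$; thereafter $\log(1/E(0))+L(t)\le2L(t)$, and summing $(1+(2L(t))^\alpha)\big(L(t+1)-L(t)\big)\ge c''$ yields that $L(t)^{1+\alpha}$ grows at least linearly in $t$, i.e.\ $L(t)\ge c\,t^{1/(1+\alpha)}$. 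Together these give $L(t)\ge c\min\{t/\log^\alpha|D|,\,t^{1/(1+\alpha)}\}$ (small $t$ being trivial after adjusting constants), hence the desired bound on $E(t)$; combining with Step 1 and the bound $\pi(D)\,\|u_0\|_{\ell^2(\pi)}^2\le\max_{u,v\in D}\pi(u)/\pi(v)$ completes the proof.

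\emph{Where the difficulty lies.} The crux is Step 2. In discrete time the natural increment is $\langle(I-P^2)f,f\rangle_\pi$, which is the Dirichlet form of the two-step chain $P^2$ and not of $P$; for (nearly) bipartite sets it can be far smaller than $\lambda(A)\,\|f\|^2_{\ell^2(\pi)}$ with $\lambda$ as in \eqref{ass:SP}, so one genuinely has to work with the spectral profile $\widetilde\Lambda$ of $P^2$ and verify that it is still governed by \eqref{ass:SP} --- equivalently, by the stretched-exponential return-probability bound \eqref{ass:HK}. (This is presumably why the normalisation $I_A-P_A^2$, rather than $I_A-P_A$, is the right one to state \eqref{ass:SP} with; cf.\ \cref{rem:nonstandardchoice}.) The remaining steps --- the $\ell^2(\pi)$ reduction, the truncation estimate, and the difference-inequality analysis --- are routine bookkeeping with reversible Markov chains and elementary analysis.
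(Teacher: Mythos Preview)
Your overall strategy---reduce to $\ell^2(\pi)$-decay by Cauchy--Schwarz, prove a one-step decay inequality via the Goel--Montenegro--Tetali truncation argument, and then analyse the resulting difference inequality in two regimes---is exactly the paper's, and Steps~1 and~3 match \cref{cor:spcor}, \cref{prop:generalescape} and the proof of \cref{cor:SPb} essentially line for line. The substantive difference is in Step~2, and although you have located it you have not quite resolved it. The paper runs the decay argument for the \emph{killed} walk $\phi_k=P_A^k(\mu/\pi)$ on an arbitrary finite $A$ (sent to $V$ at the end); its one-step $\ell^2$-decrement is precisely $\mathcal E_A(\phi_k)=\langle(I_A-(P_A)^2)\phi_k,\phi_k\rangle_\pi$, which is governed directly by the $\Lambda$ appearing in \eqref{ass:SP}, so \cref{lem:key} applies with no further input. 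You instead track the \emph{full} walk, and after truncation the form that appears is $\langle(I_{A_r}-(P^2)_{A_r})f,f\rangle_\pi$, governed by your $\widetilde\Lambda$. The point is not $I_A-P_A$ versus $I_A-P_A^2$---\eqref{ass:SP} already uses the latter---but rather $(P_A)^2$ versus $(P^2)_A$: since $(P^2)_A\ge(P_A)^2$ entrywise one only gets $\widetilde\Lambda\le\Lambda$, and there is no converse in general (e.g.\ on $\Z$ take $A=\{0,2,\dots,2n\}$: then $P_A=0$ so $\lambda(A)=1$, while $\widetilde\lambda(A)\to0$). Your detour through \eqref{ass:HK} and \cref{prop:equiv} does salvage this and is essentially correct (using that $(P^2)_A$ is positive semidefinite, so its continuous-time and two-step gaps are comparable), but it requires extending \cref{prop:equiv} from simple random walk to the chain $P^2$ and introduces a dependence of $c_1$ on the maximal degree $M$, whereas the paper's killed-walk route yields $c_1=c_1(\alpha,c)$ directly from \eqref{ass:SP} alone.
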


\cref{cor:SPb} will in turn be deduced as a special case of the following proposition. We prove two variations on the same inequality: One of these bounds concerns random walk started at a uniform point of $D$, which is what arises in our analysis of percolation, while the other concerns random walk started at a point of $D$ chosen according to the probability measure $\pi_D(v) = \pi(D)^{-1} \pi(v)\mathbbm{1}(v\in D)$. This second bound is more natural from the random walk perspective, and we include it for future use since the proof is the same.

\begin{prop}
\label{prop:generalescape}
Let $G=(V,E)$ be a connected, locally finite graph with spectral profile $\Lambda$, and let $D \subseteq V$ be finite. If $\ell,k \geq 0$ satisfy
\begin{align}
\label{e:sp}
k &\geq \ell + 1  + \sum_{i=1}^{\ell} \frac{2 \log 4}{\Lambda \bigl(4^{i+1} \max_{v\in D} \pi(v)|D|\bigr)}   &\text{ then }&&  
 \Pr_{\mu_D}(X_k \in D ) &\leq \left[\max_{u,v\in D} \frac{\pi(u)}{\pi(v)}\right]^{1/2} 2^{-\ell}.
\intertext{Similarly, if $\ell,k\geq 0$ satisfy}
k &\geq \ell + 1  + \sum_{i=1}^{\ell} \frac{2 \log 4}{\Lambda \bigl(4^{i+1} \pi(D)\bigr)} 
 &\text{ then }&&  
 \Pr_{\pi_D}(X_k \in D ) &\leq  2^{-\ell}.
\end{align}
\end{prop}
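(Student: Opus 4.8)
The plan is to reduce the statement to exponential decay of an $L^{2}(\pi)$ norm and then to run the discrete-time analogue of the spectral profile argument of Goel--Montenegro--Tetali \cite{MR2199053}. Let $g_{0}$ denote the density of the starting distribution with respect to $\pi$, so $g_{0}=\mathbf 1_{D}/(|D|\,\pi(\cdot))$ in the first case and $g_{0}=\mathbf 1_{D}/\pi(D)$ in the second; in both cases $g_{0}\ge 0$ and $\|g_{0}\|_{L^{1}(\pi)}=1$, while
\[
\frac{1}{|D|\max_{v\in D}\pi(v)}\;\le\;\|g_{0}\|_{L^{2}(\pi)}^{2}\;\le\;\frac{1}{|D|\min_{v\in D}\pi(v)}\quad\text{(first case)},\qquad \|g_{0}\|_{L^{2}(\pi)}^{2}=\frac{1}{\pi(D)}\quad\text{(second case)}.
\]
Put $g_{n}=P^{n}g_{0}$; then $g_{n}\ge 0$, $\|g_{n}\|_{L^{1}(\pi)}=1$ for all $n$ (since $\pi$ is stationary), and $n\mapsto\|g_{n}\|_{L^{2}(\pi)}^{2}$ is non-increasing (since $P$ is a self-adjoint contraction of $L^{2}(\pi)$). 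Because $\Pr_{\mu_{D}}(X_{k}\in D)=\langle g_{k},\mathbf 1_{D}\rangle_{\pi}$ by reversibility (and likewise with $\pi_{D}$), Cauchy--Schwarz gives $\Pr_{\mu_{D}}(X_{k}\in D)\le \|g_{k}\|_{L^{2}(\pi)}\sqrt{\pi(D)}$, so it is enough to show $\|g_{k}\|_{L^{2}(\pi)}^{2}\le 2^{-2\ell}\|g_{0}\|_{L^{2}(\pi)}^{2}$ whenever $k$ satisfies the stated hypothesis: indeed $\sqrt{\pi(D)}\,\|g_{0}\|_{L^{2}(\pi)}$ equals $1$ in the second case and is at most $[\max_{u,v\in D}\pi(u)/\pi(v)]^{1/2}$ in the first (using $\pi(D)\le |D|\max_{v\in D}\pi(v)$). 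Thus both inequalities reduce to the same estimate, the only difference being which bound on $\|g_{0}\|_{L^{2}(\pi)}$ we use, and correspondingly whether the level-set scale below is $\max_{v\in D}\pi(v)\,|D|$ or $\pi(D)$.

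The engine is a Faber--Krahn / Nash-type one-step contraction inequality: for every non-negative $g\in L^{1}(\pi)\cap L^{2}(\pi)$,
\[
\|g\|_{L^{2}(\pi)}^{2}-\|Pg\|_{L^{2}(\pi)}^{2}\;\ge\;\tfrac12\,\Lambda\!\left(\frac{4\|g\|_{L^{1}(\pi)}^{2}}{\|g\|_{L^{2}(\pi)}^{2}}\right)\|g\|_{L^{2}(\pi)}^{2}.
\]
I would prove this by truncating at a level. Set $g_{\delta}=(g-\delta)_{+}$, supported on $A_{\delta}=\{g>\delta\}$, which is a finite set with $\pi(A_{\delta})\le\|g\|_{L^{1}(\pi)}/\delta$ by Markov's inequality; choosing $\delta=\|g\|_{L^{2}(\pi)}^{2}/(4\|g\|_{L^{1}(\pi)})$ gives $\pi(A_{\delta})\le 4\|g\|_{L^{1}(\pi)}^{2}/\|g\|_{L^{2}(\pi)}^{2}$ and, using $(g-\delta)_{+}^{2}\ge g^{2}-2\delta g$, also $\|g_{\delta}\|_{L^{2}(\pi)}^{2}\ge\tfrac12\|g\|_{L^{2}(\pi)}^{2}$. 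One then bounds $\|g\|_{L^{2}(\pi)}^{2}-\|Pg\|_{L^{2}(\pi)}^{2}$ below by the same quantity for $g_{\delta}$ (truncation contracts the relevant increments), identifies the latter with a Dirichlet form of the walk killed on exiting $A_{\delta}$, invokes the definition of $\la(A_{\delta})$ together with the monotonicity $\Lambda(\pi(A_{\delta}))\ge\Lambda(4\|g\|_{L^{1}(\pi)}^{2}/\|g\|_{L^{2}(\pi)}^{2})$, and feeds in the factor-of-two comparison of \cref{rem:nonstandardchoice}; the $\tfrac12$ in the display is the truncation loss. I expect this step to be the main technical point: one tracks the $L^{2}$ decay of the \emph{un-killed} walk, and reconciling this with the \emph{killed}-walk spectral quantity $\la(\cdot)$ is exactly what forces the use of $P^{2}$ (rather than $P$) in the definition of $\la$ and the appeal to Perron--Frobenius.

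Granting the one-step inequality, the conclusion follows by a dyadic summation. Writing $r_{n}=4\|g_{n}\|_{L^{1}(\pi)}^{2}/\|g_{n}\|_{L^{2}(\pi)}^{2}=4/\|g_{n}\|_{L^{2}(\pi)}^{2}$, the inequality reads $\|g_{n+1}\|_{L^{2}(\pi)}^{2}\le(1-\tfrac12\Lambda(r_{n}))\|g_{n}\|_{L^{2}(\pi)}^{2}$. Split the range from $\|g_{0}\|_{L^{2}(\pi)}^{2}$ down to $2^{-2\ell}\|g_{0}\|_{L^{2}(\pi)}^{2}$ into $\ell$ dyadic blocks on each of which $\|g_{n}\|_{L^{2}(\pi)}^{2}$ decreases by a factor $4$; throughout the $i$-th block, $r_{n}\le 4^{\,i+1}/\|g_{0}\|_{L^{2}(\pi)}^{2}$, which is $\le 4^{\,i+1}\max_{v\in D}\pi(v)\,|D|$ in the first case and $\le 4^{\,i+1}\pi(D)$ in the second, so (as $\Lambda$ is non-increasing) $\Lambda(r_{n})\ge\Lambda(4^{\,i+1}\max_{v\in D}\pi(v)|D|)$ (resp.\ $\ge\Lambda(4^{\,i+1}\pi(D))$) on that block. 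Since $1-x\le e^{-x}$, at most $\tfrac{2\log 4}{\Lambda(4^{\,i+1}\cdots)}+1$ steps traverse block $i$, the $+1$ from integer rounding; summing over $i=1,\dots,\ell$ and allowing one further unit (which also makes the bound trivially true at $\ell=0$) gives exactly the stated sufficient condition on $k$. Once $k$ meets it we have $\|g_{k}\|_{L^{2}(\pi)}^{2}\le 2^{-2\ell}\|g_{0}\|_{L^{2}(\pi)}^{2}$, and the reduction of the first paragraph converts this into the two asserted bounds simultaneously.
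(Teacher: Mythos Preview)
Your reduction in the first paragraph and the dyadic summation in the last paragraph are correct and match the paper's argument essentially line for line. The gap is in the middle paragraph, precisely at the step you flag as ``the main technical point.''

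After truncation you have $\|g_\delta\|_{2,\pi}^2-\|Pg_\delta\|_{2,\pi}^2=\langle g_\delta,(I-P^2)g_\delta\rangle_\pi$, and you then ``identify'' this with the killed Dirichlet form $\mathcal E_{A_\delta}(g_\delta)=\langle g_\delta,(I_{A_\delta}-P_{A_\delta}^2)g_\delta\rangle_\pi=\|g_\delta\|_{2,\pi}^2-\|P_{A_\delta}g_\delta\|_{2,\pi}^2$. But these are \emph{not} equal: since $g_\delta$ is supported on $A_\delta$ we have $P_{A_\delta}g_\delta=\mathbf 1_{A_\delta}\cdot Pg_\delta$, and $Pg_\delta$ generally has mass outside $A_\delta$, so $\|P_{A_\delta}g_\delta\|_{2,\pi}^2\le\|Pg_\delta\|_{2,\pi}^2$ and hence $\langle g_\delta,(I-P^2)g_\delta\rangle_\pi\le\mathcal E_{A_\delta}(g_\delta)$ --- the inequality goes the wrong way for your purposes. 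Equivalently, $(P^2)_{A_\delta}\ge(P_{A_\delta})^2$ entrywise (the former allows the intermediate step of the two-step walk to leave $A_\delta$), so the quadratic form you control is governed by the smallest eigenvalue of $I_{A_\delta}-(P^2)_{A_\delta}$, which can be strictly smaller than $\lambda(A_\delta)$ and is not what enters $\Lambda$. The factor-of-two comparison in \cref{rem:nonstandardchoice} concerns $I_A-P_A$ versus $I_A-P_A^2$ (both killed) and does not bridge the killed/unkilled gap.

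The paper handles this by never working with the unkilled evolution in the spectral step. It fixes an auxiliary finite set $A$, runs the \emph{killed} iteration $\phi_k=P_A^k(\mu/\pi)$, for which the identity $\|\phi_k\|_{2,\pi}^2-\|\phi_{k+1}\|_{2,\pi}^2=\mathcal E_A(\phi_k)$ holds exactly, applies the truncation lemma to $\mathcal E_A$, and only at the very end passes to the full walk by letting $A\uparrow V$ and using monotone convergence of $\mu P_A^k\uparrow\mu P^k$. If you replace your $g_n=P^n g_0$ by the killed iterates and add this limiting step, the rest of your write-up goes through unchanged.
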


We begin by introducing some basic notation. 
We identify each function $\phi \in \R^A$ with its extension to $\R^V$ obtained by setting $\phi \equiv 0$ on $V\setminus A$.  For $i > 0$ and $\phi \in \R^V$ let $P_A ^{i}\phi \in \R^A$ be given by 
\[P_A^{i}\phi (u):=\sum_v P_A^{i}(u,v)\phi(v)=\mathrm{E}_u\Bigl[\phi(X_i) \mathbbm{1}\bigl(T_{V\setminus A}>i\bigr)\Bigr],\]
where $T_{V\setminus A}=\inf\{k \geq 0 : X_k \in V \setminus A\}$ denotes the first time that the walk visits $V\setminus A$.   Similarly, for each signed measure $\mu$ on $V$ and  $i > 0$  let $\mu P_A^i $ be the signed  measure supported on $A$  given by 
\[
\mu P_A^i (u): =\sum_{v\in A} \mu(v) P_A^{i}(v,u)= \sum_{v \in A}\mu(v) \mathrm{P}_v\bigl(X_i=u, T_{V\setminus A}>i\bigr).
\]  
We also define $\langle \phi,\psi\rangle_\pi = \sum_{v\in V}\pi(v)\phi(v)\psi(v)$ for each $\phi,\psi\in \R^V$, and define $\|\phi\|_{2,\pi}^2 =\langle \phi,\phi\rangle_\pi$ and $\|\phi\|_{1,\pi}= \sum_{v\in V}\pi(v)|\phi(v)|$ for each $\phi\in \R^V$. Similarly, for each pair of signed measures $\mu,\nu$ on $V$ we define $\langle \mu,\nu\rangle_{1/\pi}= \sum_{v\in V}\mu(v)\nu(v)/\pi(v)$ and define $\|\mu\|_{2,1/\pi}^2 = \langle \mu,\mu\rangle_{1/\pi}$. The Dirichlet form $\cE_A:\R^V\to \R$ is defined by setting 
 \[\mathcal{E}_A(\phi):=\bigl\langle  (I_{A}-P_A^{2})\phi, \phi\bigr\rangle_\pi 
 \] 
for every $\phi\in \R^V$. 
It is a standard fact that $\lambda(A)$ can be expressed alternatively in terms of the Dirichlet form as
\begin{equation}
\label{e:extremechar}
\la(A)=\inf \left\{ \frac{\EE_{A}(\phi)}{\| \phi\|_{2,\pi}^{2}} : \phi \in  \R_{+}^A,\, \phi \not \equiv 0 \right\}.
\end{equation}
Indeed, this follows from \cite[Theorem 3.33]{aldous-fill-2014}. 

 We note that the reversibility of $P$ is inherited by $P_A^{k}$, so that for every $k \geq 0$ we have that  $\pi(u)P_A^k(u,v)=\pi(v)P_A^{k}(v,u) $ for every $u,v\in V$ and $k\geq 0$. This is easily seen to imply that
\begin{equation}
\label{e:selfadjoint}
\langle  \mu P_{A}^t ,\nu \rangle_{1/\pi}=\langle    P_{A}^t \sfrac{\mu}{ \pi},\sfrac{\nu}{ \pi}\rangle_\pi=\langle \sfrac{\mu}{ \pi},    P_{A}^t\sfrac{\nu}{ \pi}\rangle_\pi=\langle  \mu ,\nu  P_{A}^t \rangle_{1/\pi}
\end{equation} 
for every pair of signed measures $\mu$ and $\nu$, where $\frac{\mu}{\pi}$ and $\frac{\nu}{\pi}$ denote the functions $\frac{\mu}{\pi}(u)=\mu(u)/\pi(u)$ and $\frac{\nu}{\pi}(u)=\nu(u)/\pi(u)$ respectively.

The first step in the proof of \cref{prop:generalescape} is the following key lemma,  which is an analogue of  \cite[Lemma 2.1]{MR2199053}.

\begin{lemma}
\label{lem:key}
 For every non-zero  $\phi \in \R_+^A$ we have that
\[\frac{\EE_{A}(\phi)}{\|\phi \|_{2,\pi}^2} \geq \frac{1}{2} \Lambda \left( 4 \|\phi \|_{1,\pi}^2/\|\phi \|_{2,\pi}^2 \right).\]
\end{lemma}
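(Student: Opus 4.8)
The plan is to imitate the Goel--Montenegro--Tetali argument (their Lemma 2.1) adapted to our discrete-time normalization with $I_A - P_A^2$. First I would reduce to the case $\|\phi\|_{2,\pi}^2 = 1$ by homogeneity, so that we must show $\mathcal{E}_A(\phi) \ge \tfrac12\Lambda(4\|\phi\|_{1,\pi}^2)$. The basic idea is a layer-cake (co-area) decomposition: for a threshold $t > 0$ write $\phi_t = (\phi - t)_+$ and let $F_t = \{v \in A : \phi(v) > t\}$ be its support. The key algebraic inputs are (i) a monotonicity/convexity fact that the Dirichlet form of the truncation is controlled by that of $\phi$, i.e.\ something of the shape $\mathcal{E}_A(\phi_t) \le \mathcal{E}_A(\phi)$ (or a version with a harmless constant), which for $I_A-P_A^2$ one gets by writing $\mathcal{E}_A(\psi) = \tfrac12\sum_{u,v}\pi(u)P_A^2(u,v)(\psi(u)-\psi(v))^2 + (\text{boundary term})$ and using that $(a_+ - b_+)^2 \le (a-b)^2$ together with nonnegativity of the boundary term; and (ii) the variational characterization \eqref{e:extremechar} applied to $\phi_t$ on the set $F_t$, giving $\mathcal{E}_A(\phi_t) \ge \lambda(F_t)\,\|\phi_t\|_{2,\pi}^2 \ge \Lambda(\pi(F_t))\,\|\phi_t\|_{2,\pi}^2$.

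Next I would run the integration over the threshold. By Markov's inequality, $\pi(F_t) \le \|\phi\|_{1,\pi}/t$, so on the range $t \ge \|\phi\|_{1,\pi}^2/\|\phi\|_{2,\pi}^2 =: K$ we have $\pi(F_t) \le \|\phi\|_{2,\pi}^2/ (\text{something}) \le $ the relevant argument of $\Lambda$; more precisely I want to work at a single well-chosen threshold, say $t = K$ or $t = \sqrt{K}\cdot(\text{const})$, rather than integrating, since $\Lambda$ is only monotone and there is no smoothness. The cleaner route is: split $\phi = \phi_{K} + (\phi \wedge K)$; estimate $\|\phi \wedge K\|_{2,\pi}^2 \le K\|\phi\|_{1,\pi} = K^2 \cdot (\|\phi\|_{2,\pi}^2/K) \le$ a fraction of $\|\phi\|_{2,\pi}^2$, say $\le \tfrac12\|\phi\|_{2,\pi}^2$ after choosing the constant in the threshold (this is where the factor $4$ inside $\Lambda$ and the $\tfrac12$ out front get calibrated), so that $\|\phi_{K}\|_{2,\pi}^2 \ge \tfrac12\|\phi\|_{2,\pi}^2$. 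Then $\pi(F_{K}) \le \|\phi\|_{1,\pi}/K = \|\phi\|_{2,\pi}^2 \le 4\|\phi\|_{1,\pi}^2/\|\phi\|_{2,\pi}^2$ (using $K \ge $ appropriate bound), so $\lambda(F_{K}) \ge \Lambda(4\|\phi\|_{1,\pi}^2/\|\phi\|_{2,\pi}^2)$, and combining with (i), (ii) gives
\[
\mathcal{E}_A(\phi) \ge \mathcal{E}_A(\phi_{K}) \ge \Lambda\!\left(4\|\phi\|_{1,\pi}^2/\|\phi\|_{2,\pi}^2\right)\|\phi_{K}\|_{2,\pi}^2 \ge \tfrac12\,\Lambda\!\left(4\|\phi\|_{1,\pi}^2/\|\phi\|_{2,\pi}^2\right)\|\phi\|_{2,\pi}^2,
\]
which is the claim. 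I would need to double-check the exact threshold and the exact constant so that the $4$ and the $\tfrac12$ come out as stated; there is a little room and the statement is robust to the precise choice.

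The step I expect to be the main obstacle is establishing the Dirichlet-form monotonicity under truncation in the form needed, $\mathcal{E}_A(\phi_t) \le \mathcal{E}_A(\phi)$, because of the slightly nonstandard operator $I_A - P_A^2$ rather than $I_A - P_A$. One has to be careful that the quadratic form $\langle (I_A - P_A^2)\psi,\psi\rangle_\pi$ really does admit a nonnegative "sum of squares plus boundary" representation (it does, since $P_A^2$ is a sub-stochastic reversible kernel on $A$, so $I_A - P_A^2 = \tfrac12\sum_{u,v}\pi(u)P_A^2(u,v)(\delta_u-\delta_v)(\delta_u-\delta_v)^{\!\top} + \operatorname{diag}(\pi(u)(1-\sum_v P_A^2(u,v)))$ as a quadratic form) and that both pieces are monotone under $\psi \mapsto \psi_+$-type truncations. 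Once that representation is in hand the rest is the routine calibration above. An alternative that sidesteps this is to invoke \eqref{e:extremechar} directly for $\phi_t$ and only use that $P_A^2(u,v) \le$ pointwise bounds, but the co-area comparison of Dirichlet forms is the genuinely load-bearing inequality and is exactly the place where the "$I - P^2$" choice has to be handled with the factor-of-two slack noted in \cref{rem:nonstandardchoice}.
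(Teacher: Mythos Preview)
Your approach is essentially the same as the paper's: choose a single threshold, truncate, and apply the variational characterization \eqref{e:extremechar} on the superlevel set. The paper takes $\beta = \|\phi\|_{2,\pi}^2/(4\|\phi\|_{1,\pi})$, sets $B=\{\phi\ge\beta\}$ and $\psi=(\phi-\beta)\mathbbm{1}_B$, and then checks $\pi(B)\le 4\|\phi\|_{1,\pi}^2/\|\phi\|_{2,\pi}^2$ by Markov, $\|\psi\|_{2,\pi}^2\ge\tfrac12\|\phi\|_{2,\pi}^2$, and $\mathcal{E}_B(\psi)=\mathcal{E}_A(\psi)\le\mathcal{E}_A(\phi)$ exactly as you outline; your worry about the Dirichlet-form contraction for $I_A-P_A^2$ is justified but your proposed fix (write the form as a sum of squares plus a nonnegative boundary term and use $((a-\beta)_+-(b-\beta)_+)^2\le(a-b)^2$) is correct and is implicitly what the paper uses.

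One small slip to be aware of: your route to the lower bound on $\|\phi_K\|_{2,\pi}^2$ via $\|\phi\wedge K\|_{2,\pi}^2\le K\|\phi\|_{1,\pi}$ does not work as stated, because the decomposition $\phi=\phi_K+(\phi\wedge K)$ has nonnegative cross terms and so only gives $\|\phi_K\|_{2,\pi}^2\le\|\phi\|_{2,\pi}^2-\|\phi\wedge K\|_{2,\pi}^2$, the wrong direction. The paper instead uses the pointwise inequality $(\phi-\beta)_+^2\ge\phi^2-2\beta\phi$ (valid for $\phi\ge0$), which yields $\|\psi\|_{2,\pi}^2\ge\|\phi\|_{2,\pi}^2-2\beta\|\phi\|_{1,\pi}=\tfrac12\|\phi\|_{2,\pi}^2$ directly and pins down the threshold as $\beta=\|\phi\|_{2,\pi}^2/(4\|\phi\|_{1,\pi})$, giving exactly the constants $4$ and $\tfrac12$ in the statement.
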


\begin{proof}
Let $\beta:=\|\phi\|_{2,\pi}^2/4\|\phi\|_{1,\pi}$, and consider $B:=\{v \in A:\phi(v) \geq \beta   \} $. By H\"older's inequality and the fact that $\phi \geq 0$ we have that $\sup_v \phi(v) \geq \|\phi \|_{2,\pi}^2/\|\phi\|_{1,\pi}$, so that in particular the set $B$ is not empty. On the other hand, we clearly have that $ \pi( B) \leq \|\phi\|_{1,\pi}/\beta =4 \|\phi\|_{1,\pi}^2/\|\phi \|_{2,\pi}^2$. Defining the function $\psi:=(\phi-\beta  )\mathbbm{1}_{B}$, we have that
\[\|\psi \|_{2,\pi}^2 \geq \left(\|\phi \|_{2,\pi}^2-\|\phi^{2}\mathbbm{1}_{A \setminus B}  \|_{1,\pi}\right)-2  \beta\|\phi \mathbbm{1}_{B}  \|_{1,\pi} \geq \|\phi \|_{2,\pi}^2  -\beta \|u \mathbbm{1}_{A \setminus B}  \|_{1,\pi}-2  \beta\|\phi \mathbbm{1}_{B}  \|_{1,\pi} \geq \half  \|\phi\|_{2,\pi}^2,    \]
where we used $\phi^{2}\mathbbm{1}_{A \setminus B}\leq \beta \phi \mathbbm{1}_{A \setminus B}$ and $2\beta \|\phi\|_1=\sfrac{1}{2} \|\phi\|_{2,\pi}^2$. 
On the other hand, we have that
 \begin{align*}
 \EE_{B}(\psi)=\EE_{A}(\psi) &= \frac{1}{2} \sum_{u,v\in B} \pi(u) P_A^2(u,v) (\phi(u)-\phi(v))^2 + \sum_{u \in B, v \in A \setminus B} \pi(u) P_A^2(u,v) (\phi(u)-\beta)^2 \\
 &\leq \frac{1}{2} \sum_{u,v\in B} \pi(u) P_A^2(u,v) (\phi(u)-\phi(v))^2 + \sum_{u \in B, v \in A \setminus B} \pi(u) P_A^2(u,v) (\phi(u)-\phi(v))^2\\ &\leq \EE_{A}(\phi)
 \end{align*}
 and hence that
 \[\frac{\EE_{A}(\phi)}{\|\phi \|_{2,\pi}^2} \geq  \frac{1}{2}  \frac{ \EE_{B}(\psi)}{ \|\psi \|_{2,\pi}^2} \geq \frac{1}{2} \Lambda \Bigl( 4 \|\phi\|_{1,\pi}^2/\|\phi \|_{2,\pi}^2 \Bigr), \quad  \]
where in the second inequality we used \eqref{e:extremechar} and the fact that $\pi( B) \leq 4 \|\phi\|_{1,\pi}^2/\|\phi \|_{2,\pi}^2  $.
\end{proof}

\begin{corollary}
\label{cor:spcor}
Let $\mu$ be a measure on $V$ with $\mu(V) \leq 1$. 
If $\ell,k\geq 0$ satisfy 
\[
k \geq \ell + 1  + \sum_{i=1}^{\ell} \frac{2 \log 4}{\Lambda (4^{i+1}\|\mu \|_{2,1/\pi}^{-2})}  
\quad \text{ then } \quad 
\|\mu P^k \|_{2,1/\pi} \leq 2^{-\ell}\|\mu \|_{2,1/\pi}.
\]
\end{corollary}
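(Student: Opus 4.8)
The plan is to run the standard spectral-profile decay argument of Goel--Montenegro--Tetali \cite{MR2199053}, adapted to the discrete-time, infinite-volume setting, by tracking the evolution of the quantity $f_j := \|\mu P^j\|_{2,1/\pi}^2$ and showing that it drops by a controlled multiplicative factor over each of $\ell$ successive ``epochs'' of lengths dictated by the spectral profile. The first step is to observe that $j \mapsto f_j$ is non-increasing: writing $\nu_j = \mu P^j$ and using reversibility (the identity \eqref{e:selfadjoint} with $A = V$, i.e. $P_V = P$), we have $f_{j} = \langle \mu P^j, \mu P^j\rangle_{1/\pi} = \langle \mu, \mu P^{2j}\rangle_{1/\pi} = \langle P^j \tfrac{\mu}{\pi}, P^j \tfrac{\mu}{\pi}\rangle_\pi$, and since $P$ is a self-adjoint contraction on $\ell^2(\pi)$, $\|P^j \tfrac{\mu}{\pi}\|_{2,\pi}^2$ is non-increasing in $j$. (This also shows $f_j = \|P^j\tfrac{\mu}{\pi}\|_{2,\pi}^2$, which is the more convenient form to work with.)

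Next comes the key per-step estimate. Set $\phi_j := P^j \tfrac{\mu}{\pi} \in \R_+^V$ (nonnegative since $\mu \geq 0$ and $P$ has nonnegative entries), so $f_j = \|\phi_j\|_{2,\pi}^2$. The drop in $f$ over one step is exactly a Dirichlet-form quantity: because $I - P^2 \succeq 0$ and using $\langle \phi_j, \phi_j\rangle_\pi - \langle \phi_{j+1},\phi_{j+1}\rangle_\pi = \langle (I-P^2)\phi_j, \phi_j\rangle_\pi = \cE_V(\phi_j)$, we get $f_j - f_{j+1} = \cE_V(\phi_j) \geq \cE_A(\phi_j)$ for any finite $A$ — but in fact we want to compare against $\cE_V$ directly, so apply Lemma \ref{lem:key} with $A = V$ (the lemma is stated for $\phi \in \R_+^A$; taking $A$ large enough to contain the support where $\phi_j$ is appreciable, or noting the statement extends to $A = V$ by monotone approximation since $\phi_j \in \ell^1(\pi)\cap\ell^2(\pi)$). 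This yields
\[
f_j - f_{j+1} \;=\; \cE_V(\phi_j) \;\geq\; \frac{f_j}{2}\,\Lambda\!\left(4\,\|\phi_j\|_{1,\pi}^2 / f_j\right).
\]
Now $\|\phi_j\|_{1,\pi} = \sum_v \pi(v)\,|P^j\tfrac{\mu}{\pi}(v)| = \sum_v (\mu P^j)(v) = \mu(V) \leq 1$ (here $\phi_j \geq 0$ and $P$ preserves total mass), and since $\Lambda$ is non-decreasing and $f_j \leq f_0 = \|\mu\|_{2,1/\pi}^2$ is decreasing, we may lower-bound $\Lambda(4\|\phi_j\|_{1,\pi}^2/f_j) \geq \Lambda(4/f_j) \geq \Lambda(4/f_0) = \Lambda(4\|\mu\|_{2,1/\pi}^{-2})$ as long as $f_j$ has not yet dropped below a target threshold. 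More precisely, define the stopping epochs: let $k_0 = 0$ and, having reached time $k_{i-1}$ with $f_{k_{i-1}} \leq 4^{-(i-1)}\|\mu\|_{2,1/\pi}^2$... actually the cleaner bookkeeping, following \cite{MR2199053}, is: as long as $f_j \geq 4^{-i}\|\mu\|_{2,1/\pi}^2$ we have $4/f_j \leq 4^{i+1}\|\mu\|_{2,1/\pi}^{-2}$, hence $\Lambda(4/f_j) \geq \Lambda(4^{i+1}\|\mu\|_{2,1/\pi}^{-2})$, and so the recursion $f_{j+1} \leq f_j\bigl(1 - \tfrac12\Lambda(4^{i+1}\|\mu\|_{2,1/\pi}^{-2})\bigr)$ holds. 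Iterating this, $f$ halves (drops by factor $\leq 1/2$, say, from $4^{-(i-1)}$ down to $4^{-i}$... one needs two factors of $1/2$) within at most $\lceil 2\log 2 / \Lambda(4^{i+1}\|\mu\|_{2,1/\pi}^{-2})\rceil \leq 1 + 2\log 4/\Lambda(4^{i+1}\|\mu\|_{2,1/\pi}^{-2})$ additional steps, using $1 - x \leq e^{-x}$ and that we need $f$ to drop by a factor $4$. Summing the ``$+1$''s over $i = 1,\dots,\ell$ gives the $\ell+1$ term, summing the $\Lambda$-terms gives $\sum_{i=1}^\ell 2\log 4/\Lambda(4^{i+1}\|\mu\|_{2,1/\pi}^{-2})$, and after all $\ell$ epochs $f_k \leq 4^{-\ell}\|\mu\|_{2,1/\pi}^2$, i.e. $\|\mu P^k\|_{2,1/\pi} \leq 2^{-\ell}\|\mu\|_{2,1/\pi}$, which is the claim.

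The main obstacle — really the only delicate point — is the discrete-time, infinite-volume bookkeeping of the epoch lengths: making sure the ``$+1$'' rounding contributions sum exactly to $\ell + 1$ and that the geometric-factor-of-$4$ per epoch is precisely what forces the appearance of $2\log 4$ (rather than $2\log 2$) in the denominators, so that the hypothesis on $k$ in the statement is matched on the nose. One must also be slightly careful that $\|\phi_j\|_{1,\pi} = \mu(V)$ can be strictly less than $1$, which only helps (it makes the argument of $\Lambda$ larger, hence $\Lambda$ larger); using the crude bound $\mu(V) \leq 1$ is what is quoted in the statement and suffices. The extension of Lemma \ref{lem:key} from finite $A$ to $A = V$ is routine: $\phi_j \in \ell^2(\pi)$ and $\cE_V(\phi_j) = \lim_{A \uparrow V} \cE_A(\phi_j \mathbbm{1}_A)$ with $\|\phi_j\mathbbm{1}_A\|_{1,\pi}, \|\phi_j\mathbbm{1}_A\|_{2,\pi} \to \|\phi_j\|_{1,\pi}, \|\phi_j\|_{2,\pi}$ by monotone convergence, and $\Lambda$ is monotone, so the inequality passes to the limit.
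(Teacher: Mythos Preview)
Your proposal is correct and takes essentially the same approach as the paper: both run the Goel--Montenegro--Tetali spectral-profile argument, tracking $f_j=\|\phi_j\|_{2,\pi}^2$, using Lemma~\ref{lem:key} for the per-step decrement $f_j-f_{j+1}=\cE(\phi_j)$, and splitting time into $\ell$ epochs during each of which $f$ drops by a factor of $4$. The only difference is where the finite-set approximation enters --- the paper works with the killed walk $P_A$ on a finite $A$ throughout (so Lemma~\ref{lem:key} applies verbatim and $\|\phi_k\|_{1,\pi}=\mu_k(A)\le 1$ is automatic) and lets $A$ be arbitrary only at the very end, whereas you work with the full walk $P$ on $V$ and defer the approximation to the invocation of Lemma~\ref{lem:key}; incidentally, $\Lambda$ is non-\emph{increasing}, though you use the correct monotonicity in the epoch bookkeeping.
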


\begin{proof} 
The claim holds vacuously if $\mu(V)=0$, so suppose not. 
Let $A \subset V$ be finite with $\mu(A)>0$, let $\mu_0:= \mu I_A$ be the restriction of $\mu$ to $A$, let $\mu_k := \mu_0 P_A^k$ for each $k\geq 1$, and 
 let $\phi_k:=P_A^k \sfrac{ \mu}{ \pi} \in \R_+^A $ for each $k\geq 0$. By \eqref{e:selfadjoint} we have that
\begin{equation}
\label{e:sp5}
\|\mu_{k} \|_{2,1/\pi}^2-\|\mu_{k+1} \|_{2,1/\pi}^2=\|\phi_{k} \|_{2,\pi}^2-\|\phi_{k+1} \|_{2,\pi}^2=\langle \phi_{k},\phi_{k}\rangle_\pi-\langle P_{A}^{2}\phi_{k},\phi_{k}\rangle_\pi  =\mathcal{E}_A(\phi_{k}). 
\end{equation}
Let $r_0=0$ and for each $\ell\geq 1$ let $r_\ell$ be maximal such that $\|\phi_{r_\ell}\|_{2,\pi}^2 > \|\phi\|_{2,\pi}^2/4^\ell$. 
 Using the fact that the $L_2$ norm of $\phi_k$ is non-increasing in $k$, as well as   \eqref{e:sp5} and Lemma \ref{lem:key}, we deduce that
 \begin{multline*} \|\phi_{k+1} \|_{2,\pi}^2 
\leq \|\phi_{k} \|_{2,\pi}^{2}\left[1- \frac{1}{2}   \Lambda \left( 4 \|\phi_k\|_{1,\pi}^2/\|\phi_k \|_{2,\pi}^2 \right)\right]\\
 \leq \|\phi_{k} \|_{2,\pi}^{2}\left[1- \frac{1}{2}   \Lambda \left( 16/\|\phi_{r_{\ell-1}} \|_{2,\pi}^2 \right)\right]
 \leq \|\phi_{k} \|_{2,\pi}^{2}\left[1- \frac{1}{2}   \Lambda \left( 4^{\ell+1}/\|\phi_0 \|_{2,\pi}^2 \right)\right]
  \end{multline*}
 for every $\ell\geq 1$ and $r_{\ell-1} \leq k\leq r_{\ell}$, where we also used that $ \|\phi_k \|_{1,\pi}=\mu_k(A) \leq 1 $ for every $k\geq 1$ in the second inequality.
Thus, we have that
\[
\|\phi_{r_{\ell-1}+1}\|_{2,\pi}^2 4^{-1} \leq 
\|\phi\|_{2,\pi}^2 4^{-\ell} < \|\phi_{r_{\ell}}\|_{2,\pi}^2 \leq \|\phi_{r_{\ell-1}+1}\|_{2,\pi}^2\left[1- \frac{1}{2}   \Lambda \left( 4^{\ell+1}/\|\phi_0 \|_{2,\pi}^2 \right)\right]^{r_\ell-r_{\ell-1}-1}
\]
for every $\ell\geq 1$.
We deduce by an elementary calculation that $r_\ell-r_{\ell-1} -1 < (2 \log 4)/\Lambda( 4^{\ell+1}/\|\phi_0 \|_{2,\pi}^2 )$. It follows immediately that if $k,\ell \geq 0$ satisfy
\[
k \geq \ell + 1  + \sum_{i=1}^{\ell} \biggl\lfloor\frac{2 \log 4}{\Lambda (4^{i+1}\|\mu I_A \|_{2,1/\pi}^{-2})} \biggr\rfloor 
\quad \text{ then } \quad 
\|\mu P_A^k \|_{2,1/\pi} \leq 2^{-\ell}\|\mu I_A \|_{2,1/\pi}.
\]
The claim follows since the finite set $A$ was arbitrary. 
\end{proof}

We are now ready to prove \cref{prop:generalescape}.

\begin{proof}[Proof of \cref{prop:generalescape}]
 Let $\mu$ be a measure on $V$ with $\mu(V)\leq 1$, let $D \subseteq V$ be finite,  and suppose that $k,\ell \geq 0$ are such that
\begin{equation}
\label{e:klcondition}
k \geq \ell + 1  + \sum_{i=1}^{\ell} \frac{2 \log 4}{\Lambda (4^{i+1}\|\mu \|_{2,1/\pi}^{-2})}.
\end{equation}
Observe that for any measure $\mu$ on $V$ we have by Cauchy-Schwarz that
$\mu(D)^2 = \langle \pi \mathbbm{1}_D, \mu\rangle_{1/\pi}^2 \leq \pi(D) \|\mu\|_{2,1/\pi}^2$, and applying \cref{cor:spcor} we deduce that
\[
\mu P^k(D)^2 \leq \pi(D) \|\mu\|_{2,1/\pi}^24^{-\ell}
\]
for every measure $\mu$ on $V$ with $\sum_{v\in V} \mu(v) \leq 1$. We conclude by applying this estimate to the uniform distribution $\mu_D$ on $D$ and the normalized stationary measure $\pi_D$ on $D$ and noting that 
$\|\mu_D\|_{2,1/\pi}^{-2} \leq [\max_{v\in D}\pi(v)] |D|$, that 
$\pi(D)\|\mu_D\|_{2,1/\pi}^2 \leq \max_{u,v\in D} \pi(u)/\pi(v)$, 
and that 
 $\|\pi_D\|_{2,1/\pi}^2= \pi(D)^{-1}$.
\end{proof}

We now perform the calculation required to deduce \cref{cor:SPb} from \cref{prop:generalescape}.

\begin{proof}[Proof of \cref{cor:SPb}]
By \cref{prop:generalescape} and the assumption that $G$ satisfies \eqref{ass:SP}, we have that if $k,\ell\geq 0$ satisfy
\[
k \geq F(\ell) := \ell + 1 + \sum_{i=1}^\ell \frac{2 \log 4}{c} \log^\alpha\left[4^{i+1}|D|\right] \quad \text{ then } \quad \Pr_{\mu_D}(X_k \in D ) \leq \left[\max_{u,v\in D} \frac{\pi(u)}{\pi(v)}\right]^{1/2} 2^{-\ell}.
\]
Moreover, we clearly have that
\[
F(\ell) \leq (\ell+1)\left[1 + \frac{2 \log 4}{c} \log^\alpha(4^{\ell+1}|D|)\right] \leq C \max\bigl\{(\ell+1)^{1+\alpha},\, (\ell+1) \log^\alpha |D|\bigr\}
\]
for some constant $C=C(c,\alpha)$, and hence that if $k,\ell \geq 0$ satisfy
\[
k \geq C \max\bigl\{(\ell+1)^{1+\alpha},\, (\ell+1) \log^\alpha |D|\bigr\} \quad \text{ then } \quad \Pr_{\mu_D}(X_k \in D ) \leq  \left[\max_{u,v\in D} \frac{\pi(u)}{\pi(v)}\right]^{1/2} 2^{-\ell}.
\]
The result now follows by an elementary calculation.
\qedhere


\end{proof}

\begin{proof}[Proof of \cref{cor:SP}]
This follows immediately from \cref{cor:SPb,prop:equiv}.
\end{proof}


\section{Proofs of the main theorems}

In this section we deduce \cref{thm:main,thm:quanttransitive} from \cref{thm:twoghostquasitransitive,lem:surgery,cor:SP}. We first formulate a generalization of \cref{thm:quanttransitive} to the quasi-transitive case, which will then imply both \cref{thm:main,thm:quanttransitive}. The statement of this generalization will employ the following quantitative notion of quasi-transitivity. Let $G=(V,E)$ be a unimodular quasi-transitive graph, let $\cO$ be a complete set of orbit representatives for the action of $\Aut(G)$ on $V$, and let $\mu$ be as in \cref{sec:unimodular}. Given $r<\infty$ and $\eps>0$, we say that $G$ satisfies \eqref{ass:QT} if
\begin{multline*}
\label{ass:QT}
\text{$\mu(o) \geq \eps$ for every $o\in \cO$, and for every $u,v \in V$}\\ \text{there exist $w\in [u]$, $z\in[v]$ such that $d(w,z)\leq r$.}
\tag{$\mathrm{QT}_{r,\eps}$}
\end{multline*}
Every unimodular transitive graph trivially satisfies \eqref{ass:QT} with $r=0$ and $\eps=1$, while every unimodular quasi-transitive graph satisfies \eqref{ass:QT} for \emph{some} $r<\infty$ and $\eps>0$, so that \cref{thm:main,thm:quanttransitive} both follow immediately from the following theorem.

\begin{theorem}
\label{thm:quantquasi}
Let $G=(V,E)$ be a unimodular quasi-transitive graph of maximum degree at most $M$ satisfying \eqref{ass:HK} and \eqref{ass:QT} for some $r<\infty$, $c,\eps>0$ and $\gamma >1/2$. Then for every $0\leq \beta <(2\gamma-1)/\gamma$ there exists a constant $K(\beta)=K(\beta,\gamma,c,r,\eps,M)$ such that
\[\bE_{p}\exp\left[\log^\beta |K_v|\right] \leq K(\beta)\]
for every $v\in V$ and $p\leq p_c$.
\end{theorem}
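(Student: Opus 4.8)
The plan is to prove \cref{thm:quantquasi} by the quantitative bootstrapping scheme outlined after \cref{cor:locality} and sketched in \cref{fig:bootstrap}. Throughout I fix $p$ in the range $p_c/2 \le p < p_c$ (the cases $p \le p_c/2$ and $p=p_c$ will be recovered at the end by monotonicity and a limiting argument), and I use that $p$ is bounded away from $0$ since $p_c \ge 1/(M-1)>0$ for a bounded-degree graph. Let $\rho$ be a root of $G$ chosen according to the measure $\mu$ of \cref{sec:unimodular}, and set $Z_p := \bE_p\exp[\log^\beta|K_\rho|]$; the goal is to show that $Z_p$ is bounded by a constant depending only on $\gamma,c,M,r,\eps,\beta$. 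This suffices: since $|K_v|$ has the same law as $|K_o|$ whenever $v\in[o]$ and $\mu(o)\ge\eps$ for every $o\in\cO$ by \eqref{ass:QT}, a bound on $Z_p = \sum_{o\in\cO}\mu(o)\bE_p\exp[\log^\beta|K_o|]$ gives $\bE_p\exp[\log^\beta|K_v|] \le \eps^{-1}Z_p$ for every $v\in V$.

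The heart of the argument consists of two conversions. First, \emph{from a tail bound to a two-point-function bound}: because the simple random walk run for $k$ steps never leaves $B(\rho,k)$, one has $\kappa_p(k) \le \bE_p[\Pr_\rho(X_k\in K_\rho)]$ (take expectations in $\Pr_\rho(X_k\in K_\rho)=\sum_{v:\,\rho\leftrightarrow v}P^k(\rho,v)$ and then the infimum over the starting orbit), and the mass-transport principle of \cref{sec:unimodular}, applied to the diagonally-invariant transport $F(u,v)=\bE_p[\mathbbm 1(u\leftrightarrow v)\,|K_u|^{-1}\Pr_u(X_k\in K_u)]$ and using that $K_\rho$ is a.s.\ finite for $p<p_c$ by sharpness of the phase transition, gives $\bE_p[\Pr_\rho(X_k\in K_\rho)]=\bE_p[\Pr_{\mu_{K_\rho}}(X_k\in K_\rho)]$. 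Applying \cref{cor:SP} pointwise in $\omega_p$ with $D=K_\rho$ (degree ratios bounded by $M$) and then splitting the expectation at a threshold $m$ via Markov's inequality $\bP_p(|K_\rho|>m)\le Z_p e^{-\log^\beta m}$ yields
\[
\kappa_p(k)\;\le\; M^{1/2}\Bigl(e^{-c_1 k/\log^\alpha m}+e^{-c_1 k^\gamma}+Z_p\,e^{-\log^\beta m}\Bigr),\qquad \alpha=\tfrac{1-\gamma}{\gamma}.
\]
Second, \emph{from a two-point-function bound back to a tail bound}: combining \cref{lem:surgery} with \cref{thm:twoghostquasitransitive} — the latter used via the fact that $\bP_p(\mathscr S_{e,n})$ depends only on the orbit of $e$, so $\sup_e\bP_p(\mathscr S_{e,n})\le \eps^{-1}\cdot 82\bigl[\sum_o\mu(o)\deg(o)\bigr]((1-p)/(pn))^{1/2}\le C_0 n^{-1/2}$ for $p\ge p_c/2$ — gives $P_p(n)^2 \le \kappa_p(k)+C_0 n^{-1/2}/\sum_{i=0}^{k-1}p^{-i}$, where $P_p(n)=\inf_v\bP_p(|E(K_v)|\ge n)$. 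Finally, to pass from $P_p$ back to per-vertex tails, \eqref{ass:QT} gives for each $v$ a vertex $w$ in the minimizing orbit and $z\in[v]$ with $d(w,z)\le r$; since $\{|E(K_z)|\ge n\}$ and $\{z\leftrightarrow w\}$ are increasing, the FKG inequality gives $\bP_p(|E(K_v)|\ge n)=\bP_p(|E(K_z)|\ge n)\le c_r^{-1}\bP_p(|E(K_w)|\ge n)=c_r^{-1}P_p(n)$ with $c_r=(2(M-1))^{-r}$, hence $\bP_p(|K_v|\ge n)\le c_r^{-1}P_p(\lceil n/2\rceil)$.

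Assembling these, one substitutes the bound on $\kappa_p(k)$ into the surgery inequality and then into the last estimate, and chooses $k=k(n)$ and $m=m(n)$ as suitable powers of $\log n$ and of $n$ respectively so as to produce a tail bound of the form $\bP_p(|K_\rho|\ge n)\le A(1+\sqrt{Z_p})\,e^{-(1+\delta)\log^\beta n}$, valid for all $n$ with $A,\delta>0$ depending only on the data. Integrating against $d\,e^{\log^\beta n}$ and using $\int_1^\infty e^{-\delta u^\beta}\beta u^{\beta-1}\,du<\infty$ gives the self-bounding inequality $Z_p \le C_*(1+\sqrt{Z_p})$ with $C_*=C_*(\gamma,c,M,r,\eps,\beta)$, whence $Z_p\le (C_*+\sqrt{C_*^2+4C_*})^2/4$. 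Having bounded $Z_p$ uniformly over $p_c/2\le p<p_c$, the same bound holds for all $p<p_c$ by monotonicity of $|K_\rho|$ in $p$, and monotone convergence as $p\uparrow p_c$ gives $Z_{p_c}\le C_*<\infty$; in particular $K_v$ is a.s.\ finite at $p_c$, and $\bE_p\exp[\log^\beta|K_v|]\le \eps^{-1}Z_p\le K(\beta)$ for all $p\le p_c$, which is \cref{thm:quantquasi} and hence also \cref{thm:main,thm:quanttransitive}.

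The main obstacle is the parameter optimization in the last paragraph: one must choose $k(n)$ and $m(n)$ so that all three error terms coming from the random-walk estimate \emph{and} the $n^{-1/2}$ error from the two-ghost inequality are simultaneously $o\bigl(e^{-(1+\delta)\log^\beta n}\bigr)$ while keeping $C_*$ finite, and it is precisely here that the hypotheses $\gamma>1/2$ and $0\le\beta<(2\gamma-1)/\gamma$ enter — outside this range the competing exponents cannot be balanced and the resulting constant diverges. A secondary technical point is the careful bookkeeping relating the $\mu$-weighted root quantities to the per-vertex quantities throughout, which is exactly what the quantitative quasi-transitivity hypothesis \eqref{ass:QT} is designed to handle.
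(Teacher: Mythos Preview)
Your proposal follows exactly the paper's bootstrapping scheme: bound $\kappa_p(k)$ via the mass-transport identity $\E_p[\Pr_\rho(X_k\in K_\rho)]=\E_p[\Pr_{\mu_{K_\rho}}(X_k\in K_\rho)]$ and \cref{cor:SP}, feed this into \cref{lem:surgery} together with the two-ghost inequality, optimize in $k$, and close with the self-bounding inequality $Z_p\le C(1+\sqrt{Z_p})$ plus sharpness and monotone convergence. Your threshold split at $m$ is equivalent, after optimizing in $m$, to the paper's $\sup_x$ trick in \cref{lem:kappapk}, yielding the same $\exp[-c\,k^{\beta/(\alpha+\beta)}]$ bound.

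One point to correct: your rearrangement of \cref{lem:surgery} places $\sum_{i=0}^{k-1}p^{-i}$ in the \emph{denominator}, which is the literal reading of the displayed inequality but cannot be right (combined with your $\kappa_p$ bound it would give $P_p(n)=0$ on letting $k,m\to\infty$). The form actually used in the paper's proof has the sum in the \emph{numerator}, i.e.\ $P_p(n)^2\le\kappa_p(k)+C\bigl[\sum_{i=0}^{k-1}p^{-i}\bigr]n^{-1/2}$, which forces $k\asymp\log n$; with that constraint the three random-walk error terms become $\exp[-c\log^{1-\alpha}n]$, $\exp[-c\log^{\gamma}n]$, and $Z_p\exp[-c\log^{\beta}n]$, and it is exactly the requirement $1-\alpha>\beta$ (i.e.\ $\beta<(2\gamma-1)/\gamma$) that makes the first dominate the third after taking square roots and integrating --- just as you say in your final paragraph. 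With this one fix your argument is the paper's.
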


Given a unimodular quasi-transitive graph $G=(V,E)$, we let $\rho$ be a random root vertex of $G$ chosen according to the measure $\mu$, and write $\P_p$ and $\E_p$ for probabilities and expectations taken with respect to the joint law of $\omega_p$ and $\rho$. Recall that we also write $\mathrm{P}_v$ for the law of a simple random walk on $G$ started at the vertex $v$, and for each finite set $D\subset V$ we write $\mathrm{P}_{\mu_D}$ for the law of a simple random walk started from a uniform point of $D$. (The two uses of $\mu$ should not cause confusion.)

\begin{lemma}
\label{lem:kappapk}
Let $G=(V,E)$ be a unimodular quasi-transitive graph with degrees bounded by $M$ satisfying \eqref{ass:HK} for some $c>0$ and $0<\gamma \leq 1$, and let $\alpha=(1-\gamma)/\gamma$. Then for every $\beta\in (0,1]$  there exists a constant $c_2(\beta)=c_2(\beta,\alpha,c,M) $, such that  
\[ \E_p\left[\mathrm{P}_{\rho}(X_k \in K_\rho)\right] \leq 2\left[\max_{u,v\in V}\frac{\deg(u)}{\deg(v)}\right]^{1/2}  \E_p \exp\left[\log^\beta |K_{\rho}|\right] \exp\left[-c_2(\beta) k^{\beta/(\alpha+\beta)}\right].
\]
for every $k\geq 1$ and $0\leq p \leq 1$.
\end{lemma}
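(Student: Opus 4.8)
\textbf{Proof plan for \cref{lem:kappapk}.}
The plan is to combine the single-vertex escape estimate \cref{cor:SP} with the mass-transport principle and then optimize the $\min$ against the exponential moment of $\log|K_\rho|$. First I would note that since $\rho$ is chosen according to $\mu$, for each fixed percolation configuration $\omega_p$ the law of a simple random walk started at $\rho$ can be rewritten using the mass-transport principle: for any diagonally-invariant $F$, $\E[F(\rho,\cdot)]=\E[F(\cdot,\rho)]$, and applying this to $F(u,v)=\mathbbm{1}(v\in K_u)\mathrm P_u(X_k\in K_u)$ — which is diagonally invariant since percolation, the cluster, and the random walk are all automorphism-equivariant — gives $\E_p[\mathrm P_\rho(X_k\in K_\rho)]=\E_p\bigl[\,|K_\rho|^{-1}\sum_{v\in K_\rho}\mathrm P_v(X_k\in K_\rho)\bigr]=\E_p\bigl[\mathrm P_{\mu_{K_\rho}}(X_k\in K_\rho)\bigr]$. (This is exactly the ``central equality'' referenced in the proof overview.)

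Next I would apply \cref{cor:SP} with $D=K_\rho$: conditionally on $\omega_p$ and on $K_\rho$ being finite (which is irrelevant since we only need the bound on the finite-cluster event, and on the infinite-cluster event the statement is trivial as the left side is at most $1$ while... actually one should just apply \cref{cor:SP} for each finite $D$ and note $|K_\rho|$ may be infinite, in which case $\mathrm P_{\mu_{K_\rho}}$ is undefined; I would handle this by splitting on $\{|K_\rho|<\infty\}$ and bounding the complementary contribution trivially, or equivalently by working with $K_\rho\cap B(\rho,R)$ and letting $R\to\infty$). On $\{|K_\rho|<\infty\}$, \cref{cor:SP} gives
\[
\mathrm P_{\mu_{K_\rho}}(X_k\in K_\rho)\le \Bigl[\max_{u,v}\tfrac{\deg u}{\deg v}\Bigr]^{1/2}\exp\Bigl[-c_1\min\bigl\{k/\log^\alpha|K_\rho|,\;k^\gamma\bigr\}\Bigr].
\]

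The main obstacle — and the heart of the lemma — is converting the random exponent $\min\{k/\log^\alpha|K_\rho|,\,k^\gamma\}$ into the claimed deterministic rate $k^{\beta/(\alpha+\beta)}$ at the cost of the factor $1+\E_p\exp[\log^\beta|K_\rho|]$. I would do this pointwise in the cluster size: write $L=|K_\rho|$ and split according to whether $\log^\alpha L \le k^{1-\beta/(\alpha+\beta)}=k^{\alpha/(\alpha+\beta)}$ or not. In the first case $k/\log^\alpha L\ge k^{\beta/(\alpha+\beta)}$, so the escape probability is at most $\exp[-c_1 k^{\beta/(\alpha+\beta)}]$ (also using $k^\gamma\ge k^{\beta/(\alpha+\beta)}$ for $k\ge1$, since $\beta\le 1\le 1/\gamma$ forces $\beta/(\alpha+\beta)=\beta\gamma\le\gamma$). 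In the second case, $\log^\alpha L > k^{\alpha/(\alpha+\beta)}$, i.e. $\log^\beta L > k^{\beta/(\alpha+\beta)}$, so $\exp[\log^\beta L]>\exp[k^{\beta/(\alpha+\beta)}]$ and hence $1\le \exp[\log^\beta L]\,\exp[-k^{\beta/(\alpha+\beta)}]$; bounding the escape probability by $1$ (times the degree prefactor) then absorbs everything into the exponential-moment term. Taking expectations over $\omega_p$ and $\rho$, adding the two cases, and using $c_2(\beta)=\min\{c_1,1\}$ (or any smaller constant) gives the stated inequality, with the $1+\E_p\exp[\log^\beta|K_\rho|]$ accommodating both the first case (bounded by $\E_p\exp[\log^\beta|K_\rho|]\ge 1$ trivially, or just by the constant $1$) and the trivial contribution from $\{|K_\rho|=\infty\}$. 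The only mild subtlety is checking the exponent arithmetic $\beta/(\alpha+\beta)$ with $\alpha=(1-\gamma)/\gamma$, where $\alpha+\beta=(1-\gamma)/\gamma+\beta$ and one verifies $\beta/(\alpha+\beta)\le\gamma$ using $\beta\le1$; this is the routine calculation I would not belabor.
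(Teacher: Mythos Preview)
Your proposal is correct and follows essentially the same approach as the paper: mass-transport to rewrite $\E_p[\mathrm P_\rho(X_k\in K_\rho)]$ as $\E_p[\mathrm P_{\mu_{K_\rho}}(X_k\in K_\rho)]$, then apply \cref{cor:SP}, then optimize against the exponential moment. Two small remarks: (i) your mass-transport function $F(u,v)=\mathbbm{1}(v\in K_u)\mathrm P_u(X_k\in K_u)$ is missing a factor of $|K_u|^{-1}$ --- as written, $\sum_v F(\rho,v)=|K_\rho|\,\mathrm P_\rho(X_k\in K_\rho)$, not $\mathrm P_\rho(X_k\in K_\rho)$; the paper uses $f(u,v)=|K_u|^{-1}\mathbbm{1}(v\in K_u)\mathrm P_u(X_k\in K_u)$, which gives exactly the equality you then write down; (ii) where you use a threshold split on $\log L$ versus $k^{1/(\alpha+\beta)}$, the paper instead bounds $\E_p\exp[-c_1 k/\log^\alpha|K_\rho|]\le \sup_{x\ge1}\exp[-\log^\beta x - c_1 k/\log^\alpha x]\,\E_p\exp[\log^\beta|K_\rho|]$ and minimizes the exponent by calculus --- your threshold is precisely that minimizer, so the two are equivalent. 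Finally, your hesitation about $\{|K_\rho|=\infty\}$ is resolved more simply than you suggest: on that event $\E_p\exp[\log^\beta|K_\rho|]=\infty$, so the claimed inequality is vacuous.
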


\begin{proof} 
The claim is trivial if $p$ is such that $|K_\rho|=\infty$ with positive probability, so suppose not.
Let $c_1=c_1(\gamma,c,M)$ be the constant from \cref{cor:SP}.
Applying the mass-transport principle to the function $f:V^2\to [0,\infty]$ defined by 
\[f(u,v)=\bE_p\left[\frac{\mathbbm{1}(v\in K_u)}{|K_u|}\Pr_u[X_k \in K_u] \right], \]
we deduce that
\[
\E_p\left[\mathrm{P}_{\rho}(X_k \in K_\rho)\right] = \E\sum_{v\in V}f(\rho,v) = \E\sum_{v\in V}f(v,\rho) = \E_p\left[\Pr_{\mu_{K_\rho}}(X_k \in K_\rho)\right],
\]
and we deduce from \cref{cor:SP} that
\begin{multline}
\label{eq:421}
\E_p\left[\mathrm{P}_{\rho}(X_k \in K_\rho)\right] \leq 
\left[\max_{u,v\in V}\frac{\deg(u)}{\deg(v)}\right]^{1/2} \E_p \exp\left[ -c_1 \min\left\{\frac{k}{\log^\alpha|K_\rho|},k^\gamma\right\}\right]
\\\leq 
\left[\max_{u,v\in V}\frac{\deg(u)}{\deg(v)}\right]^{1/2} \left[\E_p \exp\left[ -c_1 \frac{k}{\log^\alpha|K_\rho|}\right] + e^{-c_1 k^\gamma}\right].
\end{multline}
Using the inequality $\E_p[g(|K_\rho|)] \leq \sup_{x \geq 1 }[g(x)/h(x)]\E_p[h(|K_\rho|)] $, which holds for every $g,h:\N \to \R_+$, we deduce that 
\[
\E_p\exp\left[ - c_1 \frac{k}{\log^\alpha |K_\rho|} \right] \leq \sup_{x\geq 1}\exp\left[-\log^\beta x - c_1\frac{k}{\log^\alpha x}\right]\E_p\exp\left[ \log^\beta |K_\rho|\right]. 
\]
A direct and elementary calculation shows that the minimum of $\log^\beta x + c_1k  \log^{-\alpha} x$ is attained when $\log^{\alpha+\beta}x =\alpha c_1k/\beta $, and we deduce that there exists a constant $c=c(\alpha,\beta,c_1)$ such that
\begin{equation}
\label{eq:422}
\E_p\exp\left[ - c_1 \frac{k}{\log^\alpha |K_\rho|} \right] \leq \E_p\exp\left[ \log^\beta |K_\rho|\right] \exp\left[-c k^{\beta/(\alpha+\beta)}\right].
\end{equation}
Taking $c_2 = \min\{c_1,c\}$, the proof is now easily concluded by combining \eqref{eq:421} and \eqref{eq:422} and noting that $\gamma = 1/(\alpha+1) \geq \beta/(\alpha+\beta)$ and that $\E_p\exp\left[ \log^\beta |K_\rho|\right] \geq 1$.
\end{proof}

\begin{proof}[Proof of \cref{thm:quantquasi}] Let $\alpha=(1-\gamma)/\gamma$ and let $ 0 < \beta <(2\gamma-1)/\gamma = 1-\alpha $. Note that such a $\beta$ exists precisely when $\gamma>1/2$. Recall that $M$ is a constant satisfying $\max_{v\in V}\deg(v) \leq M$. 
\cref{thm:twoghostquasitransitive} immediately implies that there exists a constant $C=C(M,r,\eps)$ such that
\begin{equation}
\label{eq:convenienttwoghost}
\sup_{e\in E}\bP_p(\mathscr{S}_{e,n}) \leq C \left[\frac{1-p}{pn}\right]^{1/2}.
\end{equation}
for every $p\in [0,1]$ and $n\geq 1$, where the event $\mathscr{S}_{e,n}$ is defined as in the introduction. Recall that we define $P_p(n) = \inf_{v\in V}\bP_p(|E(K_v)|\geq n)$ and $\kappa_p(k)=\inf\{\tau_p(u,v) : d(u,v)\leq k\}$ for each $p\in [0,1]$ and $n,k\geq 1$. Note also that we have the elementary bound $p_c \geq 1/(\max_{v\in V}\deg(v) -1) > 1/2M$.

We have trivially that $\kappa_p(k) \leq \E_p[\mathrm{P}_\rho(X_k \in K_\rho)]$ for every $0\leq p \leq 1$ and $k\geq 1$. Thus, applying \cref{lem:surgery,lem:kappapk} and rearranging, we obtain that
\begin{align*}
P_p(n)^2 &\leq C \left[ \sum_{i=0}^{k-1} p^{-i}\right] \left[\frac{1-p}{pn}\right]^{1/2} + \kappa_p(k) \\&\leq 
 C \left[ \sum_{i=0}^{k-1} p^{-i}\right] \left[\frac{1-p}{pn}\right]^{1/2} + 2\left[\max_{u,v\in V} \frac{\deg(u)}{\deg(v)}\right]^{1/2}\E_p\exp\left[\log^\beta|K_\rho|\right]\exp\left[-c_2(\beta)k^{\beta/(\alpha+\beta)}\right]
\end{align*}
for every $0\leq p <p_c$ and $k,n\geq 1$.
Taking $k=k_{n}= \lceil \tfrac{1}{4}\log_{1/2M} n \rceil$ and using that $\sum_{i=0}^{k-1} p^{-i} \leq 2(2M)^k$ for $1/2M\leq p \leq 1$, we deduce by elementary calculation that there exist positive constants $C_1,C_2$ and $c_3$ depending only on $c,\alpha,\beta,r,\eps,$ and $M$ such that
\begin{multline*}
P_p(n)^2 \leq C_1  n^{-1/4} + C_1 \E_p\exp\left[\log^\beta|K_\rho|\right]\exp\left[-c_3 \log^{\beta/(\alpha+\beta)}n\right]\\
\leq C_2 \E_p\exp\left[\log^\beta|K_\rho|\right]\exp\left[-c_3 \log^{\beta/(\alpha+\beta)}n\right]
\end{multline*}
for every $1/2M \leq p <p_c$ and $n\geq 1$. On the other hand, for each $u,v \in V$ and $n\geq 1$ we have by the Harris-FKG inequality that
\begin{multline*}
\bP_p(|E(K_u)|\geq n) \geq \bP_p\bigl(\{|E(K_v)| \geq n\} \cap \{u \leftrightarrow v\}\bigr) \\\geq \tau_p(u,v) \bP_p(|E(K_u)|\geq n) \geq p^{d(u,v)}\bP_p(|E(K_u)|\geq n),
\end{multline*}
and we deduce that there exists a constant $C_3=C_3(M,r)$ such that 
\[
\sup_{v\in V}\bP_p(|K_v|\geq n) \leq \sup_{v\in V}\bP_p(|E(K_v)|\geq n) \leq C_3 P_p(n)
\]
for every $1/2M\leq p \leq 1$ and $n\geq 1$. Putting this all together, it follows that there exists a constant $C_4=C_4(c,\alpha,\beta,r,\eps,M)$ such that
\begin{equation}
\sup_{v\in V}\bP_p\bigl(|K_v|\geq n\bigr) \leq C_4 \exp\left[-\frac{c_3}{2}\log^{\beta/(\alpha+\beta)}n\right] \sqrt{\E_p \exp \left[\log^\beta|K_\rho|\right]} 
\end{equation}
for every $1/2M\leq p <p_c$ and $n\geq 1$, and hence that
\begin{align*}
\sup_{v\in V} \bP_p\Bigl(\exp\left[\log^\beta|K_v|\right] \geq x \Bigr) &\leq 
\sup_{v\in V}\bP_p\Bigl(|K_v|\geq \exp\left[ \log^{1/\beta} x\right] \Bigr)\\
&\leq 
C_4  \exp\left[-\frac{c_3}{2}\log^{1/(\alpha+\beta)} x\right] \sqrt{\E_p \exp \left[\log^\beta|K_\rho|\right]}
\end{align*}
for every $1/2M\leq p <p_c$ and $x\geq 1$. We integrate this bound to obtain that, since $|K_v|\geq 1$, 
\begin{multline*}
\sup_{v\in V} \bE_p\left[\log^\beta|K_v|\right] \leq \int_1^\infty 
\sup_{v\in V} \bP_p\Bigl(\exp\left[\log^\beta|K_v|\right] \geq x \Bigr) \dif x \\
\leq 
C_4 \sqrt{\E_p\exp \left[\log^\beta|K_\rho|\right]} \int_1^\infty \exp\left[-\frac{c_3}{2}\log^{1/(\alpha+\beta)} x\right] \dif x
\end{multline*}
for every $1/2M \leq p <p_c$. 
Since $\alpha+\beta<1$ this integral converges, and we obtain that there exists a positive constant $C_5=C_5(c,\alpha,\beta,r,\eps,M)$ such that
\begin{equation}
\label{eq:done}
\E_p\exp\left[\log^\beta|K_\rho|\right] \leq 
\sup_{v\in V} \bE_p\exp\left[\log^\beta|K_v|\right] \leq 
C_5 \sqrt{\E_p \exp \left[\log^\beta|K_\rho|\right]}
\end{equation}
for every $1/2M \leq p <p_c$. 
If $p<p_c$ then we have by sharpness of the phase transition \cite{MR852458,aizenman1987sharpness,duminil2015new} that $\E_p |K_\rho| <\infty$ and consequently that $\E_p \exp\left[ \log^\beta|K_\rho| \right]\leq \E_p |K_\rho| <\infty$. Thus, we may safely rearrange \eqref{eq:done} and deduce that there exists a constant $C_6=C_6(c,\alpha,\beta,r,\eps,M)$ such that 
\[
\E_p \exp \left[\log^\beta|K_\rho|\right] \leq \sup_{v\in V}\bE_p \exp \left[\log^\beta|K_v|\right] \leq C_6
\]
for every $1/2M \leq p < p_c$. Coupling $\omega_p$ for different values of $p$ in the standard monotone fashion (see e.g.\ \cite[Page 11]{grimmett2010percolation}) and applying the monotone convergence theorem implies that this bound continues to hold at $p_c$, completing the proof.
 \qedhere

\end{proof}

\begin{proof}[Proof of \cref{thm:main,thm:quanttransitive}]
Both results are immediate consequences of \cref{thm:quantquasi}.
\end{proof}

\section{Closing remarks}
\label{sec:closing}

\begin{remark}
\label{remark:speed}
Suppose that $G$ is a quasi-transitive graph satisfying \eqref{ass:HK} for some $c>0$  and $\gamma>0$, and suppose that the simple random walk on $G$ satisfies a bound of the form $\P( d(X_0,X_n) \leq Cn^\nu) \geq c$ for some $1/2\leq \nu \leq 1$, $c>0$ and $C<\infty$. (A theorem of Lee and Peres \cite{MR3127886} implies that such an inequality cannot hold for $\nu<1/2$.) Then the proof of \cref{thm:main} can easily be generalized to show that critical percolation on $G$ has no infinite clusters under the assumption that $(1-\gamma)\nu<\gamma$.  We have not included the proof of this stronger result since we do not know of any examples that we can prove satisfy this condition but do not satisfy the hypotheses of \cref{thm:main}. However, Tianyi Zheng has informed us that this stronger theorem \emph{might} apply to Cayley graphs of the first Grigorchuk group, for which the optimal values of $\gamma$ and $\nu$ are unknown.
\end{remark}

\begin{remark}
More generally, a similar analysis to that discussed in \cref{remark:speed} shows the following: Suppose that $G$ is a  quasi-transitive graph for which  there exists a symmetric stochastic matrix on $G$ that is invariant under the diagonal action of $\Aut(G)$ and for which the associated random walk $X$ satisfies \eqref{ass:HK} for some $c>0$ and $\gamma>0$ and satisfies $\P( d(X_0,X_n) \leq Cn^\nu) \geq c$ for some $1/2\leq \nu <\infty$, $c>0$ and $C<\infty$. (One may need to take $\nu>1$ if the walk takes long jumps.) If $(1-\gamma)\nu<\gamma$ then critical percolation on $G$ has no infinite clusters almost surely. Long-range random walks have been a powerful tool for analyzing specific examples of groups of intermediate growth, see e.g.\ \cite{erschler2018growth}.
\end{remark}

\begin{remark}
 We expect that with a sufficiently delicate analysis one can push our method to handle all quasi-transitive graphs satisfying a return probability bound of the form $p_n(v,v) \leq \exp\bigl[- \omega(n^{1/2})\bigr]$, as well as all quasi-transitive graphs satisfying $p_n(v,v) \leq \exp\bigl[- \Omega(n^{1/2})\bigr]$ and for which the random walk has zero speed in the sense that $d(0,X_n)/n \to 0$ a.s.\ as $n\to\infty$. 
Since \cref{conj:pc} is already known in the exponential growth case and the random walk on any graph of subexponential growth has zero speed, this would allow one to extend \cref{thm:main} to the case $\gamma=1/2$. (The fact that the random walk on a graph of subexponential growth has zero speed is an immediate consequence of the Varopoulos-Carne bound, see \cite[Theorem 13.4]{LP:book}.)
On the other hand, it seems that a new idea is needed to handle the case $\gamma<1/2$, and a solution to the following problem would be a promising next step towards \cref{conj:pc}.

\end{remark}

\begin{problem}
Extend \cref{thm:main} to quasi-transitive graphs satisfying a return probability estimate of the form $p_n(v,v) \leq \exp\left[-\Omega(n^\gamma)\right]$ for some $0<\gamma<1/2$.
\end{problem}

The methods of \cite{doi:10.1093/imrn/rny034} may be relevant. 
Note that if Grigorchuk's \emph{gap conjecture} \cite{MR3174281} is true, then a solution to this problem for all $0<\gamma<1/2$ would settle \cref{conj:pc} for all Cayley graphs of intermediate growth. Indeed, if the strong version of the conjecture is true then it would suffice to consider the case $\gamma \geq 1/5$.

\begin{remark}
\label{remark:obstacle}
We now indicate why one should not expect our proof to handle the case $\gamma <1/2$ without substantial modification.
 Let $G$ be transitive and satisfy \eqref{ass:HK} for some $0<\gamma \leq 1$ and $c_1>0$. As before, we let $\alpha = (1-\gamma)/\gamma$, so that $\alpha<1$ if and only if $\gamma >1/2$. Our proof depends upon the analytic consequences of the inequality
\begin{align}
\bP_p(|K| \geq n)^2 &\leq \min_{k \geq 1} \left[\bE_p\exp\left[-c_2 \min \left\{k^\gamma, \frac{k}{\log^\alpha |K| }\right\}\right] + C_1 p^{-k} n^{-1/2}\right]
\label{eq:pair1}
\end{align}
for appropriate constants $C_1 \geq 1$ and $c_2>0$. 
Let us suppose that we had the even stronger inequality
\[
\bP_p(|K| \geq n)^2 \leq \min_{x \geq 0} \left[\bE_p\exp\left[-\frac{c_2x}{\log^\alpha |K|}\right] + C_1 p^{-x} n^{-1/2}\right].
\]
We certianly never want to take $x \geq \log_{1/p} n$ when realizing the above minimum, and since the expectation on the right hand side is decreasing in $x$, this inequality is weaker than the inequality
\[
\bP_p(|K| \geq n)^2 \leq  \bE_p\exp\left[- \frac{c_3 \log n}{\log^\alpha |K|}\right] 
\]
for appropriate choice of $c_3>0$.
Since $ \bE_p\exp\left[- \frac{c_3 \log n}{\log^\alpha |K|}\right]  \geq e^{-c_3}  \bP_p\bigl(|K| \geq e^{ \log^{1/\alpha} n} \bigr)$, this inequality is, in turn, weaker than the inequality
\begin{equation}
\bP_p(|K| \geq n)^2 \leq e^{-c_3} \bP_p\Bigl(|K| \geq e^{ \log^{1/\alpha} n} \Bigr).
 \label{eq:strong}
\end{equation}
Thus, any analytic consequence of the inequality \eqref{eq:pair1} must also be a consequence of the inequality \eqref{eq:strong}.
If $\alpha \geq 1$ then $e^{\log^{1/\alpha} x} \leq x$ for every $x \geq 1$, so that any decreasing function $f:[1,\infty] \to [0,1]$ with $f(x) \leq e^{-c_3}$ for all $x \geq 1$ trivially satisfies the inequality $f(x)^2 \leq e^{-c_3} f\bigl( e^{ \log^{1/\alpha} x} \bigr)$. Thus, the inequality \eqref{eq:strong} does not yield any non-trivial information on the rate of decay of $\bP_p(|K| \geq n)$ in this case. (The reader may find it illuminating to consider the constraints that \eqref{eq:strong} would place on the rate of decay of $\bP_p(|K| \geq n)$ when $\alpha <1$.) This appears to present a serious obstruction to extending our method to the case $\gamma < 1/2$. 
\end{remark}

\subsection*{Acknowledgments}
We thank Gidi Amir and Tianyi Zheng for sharing their expertise on groups of intermediate growth. JH was supported financially by 
the EPSRC grant EP/L018896/1.

 \setstretch{1}
 \footnotesize{
  \bibliographystyle{abbrv}
  \bibliography{unimodularthesis.bib}
}
\end{document}